\newcommand{\Diff}{{\rm Diff}_+^r(F)}
\newcommand{\rs}{\backslash}
\newcommand{\SL}{SL(2,\R)}
\newcommand{\PSL}{PSL(2,\R)}
\newcommand{\Z}{\mathbb{Z}}
\newcommand{\R}{\mathbb{R}}
\newcommand{\C}{\mathbb{C}}
\newcommand{\N}{\mathbb{N}}
\newcommand{\Q}{\mathbb{Q}}
\newcommand{\Hy}{\mathbb{H}}
\newcommand{\F}{\mathcal{F}}
\newcommand{\A}{{\rm Aff}_+(\R)}
\newcommand{\M}{\mathcal{M}}
\newcommand{\scirc}{{\scriptstyle \circ}}
\newcommand{\matrice}[4]{\left(
\begin{array}{cc}
 #1 & #2 \\
 #3 & #4
\end{array}
\right) }
\newcommand{\vect}[2]{\big(
\raisebox{-0.8ex}{$\stackrel{^{\scriptstyle #1}}{\scriptstyle #2}$} \big) }
\newcommand{\oclosure}[1]{\overline{#1}^{\, \circ}}
\newlength{\dhatheight}
\newcounter{itheorem}
\newcounter{atheorem}
\newtheorem{theorem}{Theorem}[section]
\newtheorem{corollary}[theorem]{Corollary}
\newtheorem{lemma}[theorem]{Lemma}
\newtheorem{proposition}[theorem]{Proposition}
\newtheorem{aproposition}[atheorem]{Proposition}
\newtheorem{atheorem}[atheorem]{Theorem}
\newtheorem{itheorem}[itheorem]{Theorem}
\newtheorem{speciallemma}[theorem]{Classification Lemma}
\theoremstyle{definition}
   \newtheorem{example}[theorem]{Example}
    \newtheorem{examples}[theorem]{Examples}
\newtheorem{remark}[theorem]{Remark}
\newtheoremstyle{named}{}{}{\itshape}{}{\bfseries}{.}{.5em}{\thmnote{#3's }#1}
\theoremstyle{named}
\newtheorem*{namedtheorem}{Theorem}
\newtheorem*{namedquestion}{Question}
\newtheorem*{namedstheorem}{Structure Theorem}
\begin{document}

\title[Dynamics of the horocycle flow for homogeneous foliations ]{Remarks on the dynamics of the horocycle flow for homogeneous foliations by hyperbolic surfaces}
\author[F. Alcalde Cuesta]{Fernando Alcalde Cuesta} 
\address{GeoDynApp - ECSING group (Spain)}
\email{fernando.alcalde@usc.es}

\author[F. Dal'Bo]{Fran\c{c}oise Dal'Bo}
\address{Institut de Recherche Math\'ematiques de Rennes  \\ 
 Universit\'e de  Rennes 1 \\ F-35042 Rennes (France)}
\email{francoise.dalbo@univ-rennes1.fr}
\date{\today}

 

\subjclass[2010]{
 37D40, 37C85, 57R30}

\dedicatory{Dedicated to Pierre Molino with admiration}

\begin{abstract}
This article is a first step towards the understanding of the dyna\-mics of the horocycle flow on foliated manifolds by hyperbolic surfaces. This is motivated by a question formulated by M. Mart\'{\i}nez and A. Verjovsky on the minimality of this flow assuming that the "natural" affine foliation is minimal too. We have tried to offer a simple presentation, which allows us to update and shed light on  the classical theorem proved by G. A. Hedlund in 1936 on the minimality  of the horocycle flow on compact hyperbolic surfaces. Firstly, we extend this result to the product of 
$\PSL$ and a Lie group $G$, which places us within the homogeneous framework investigated by M. Ratner. Since our purpose is to deal with non-homogeneous situations, we do not use Ratner's famous Orbit-Closure Theorem, but we give an elementary proof. 
We show that this special situation arises for homogeneous Riemannian and Lie foliations, reintroducing the foliation point of view. Examples and counter-examples take an important place in our work, in particular, the very instructive case of the solvable manifold $T^3_A$.
Our aim in writing this text is to offer to the reader an accessible introduction to a subject that was intensively studied in the algebraic setting, although there still are unsolved geometric problems.
\end{abstract}

\maketitle

\section{Introduction and motivation} \label{Smot}

In this paper, we start by focusing our attention on the following subgroups
$$
U = \{ \ \matrice{1}{t}{0}{1}  \ /  \ t \in \R \ \} \quad \mbox{and} \quad
B = \{ \ \matrice{\lambda}{t}{0}{\lambda^{-1}}  \ /  \ t \in \R , \lambda \in \R^+_\ast \ \}
$$
of the group $\PSL = \SL / \{\pm Id\}$.
We also consider a connected Lie group $G$ and the natural right actions of $U$ and $B$ on the product $\PSL \times G$ where every element of $\PSL$ acts trivially on the second factor $G$. We discuss the minimality of the right actions of $U$ and $B$ induced on the left quotient 
$X = \Gamma \rs \PSL \times G$ by a cocompact discrete subgroup of $\PSL \times G$. Recall that an action is said to be {\em minimal} if all the orbits are dense. 
\medskip 

In the case where $G$ is trivial, 
 assuming $\Gamma$ is torsion-free, the quotient $X = \Gamma \rs \PSL$ becomes the unit tangent bundle $T^1S$ to the compact 
hyperbolic surface $S = \Gamma \rs \Hy$ obtained from the Poincar\'e half-plane $\Hy$.
In 1936, G. A. Hedlund \cite{H} proved that the horocycle flow on $X$ is minimal (for an elementary proof, see \cite{G2}). In our context, this theorem can be reformulated as follows: 

\begin{namedtheorem}[Hedlund] Let $\Gamma$ be a discrete torsion-free cocompact subgroup of $\PSL$. Then the right a on $X = \Gamma \rs \PSL$ is minimal. 
\end{namedtheorem}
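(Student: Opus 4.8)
The plan is to prove that every orbit of the horocycle flow (the right $U$-action) on $X = \Gamma \rs \PSL$ is dense. The classical strategy rests on the interplay between the horocycle subgroup $U$ and the geodesic subgroup, exploiting the hyperbolic dynamics encoded in the commutation relation between them. Writing $a_s = \matrice{e^{s/2}}{0}{0}{e^{-s/2}}$ for the diagonal (geodesic) flow, the key algebraic fact is the renormalization identity $a_s \, u_t \, a_{-s} = u_{t e^{s}}$, where $u_t = \matrice{1}{t}{0}{1}$; this says that the geodesic flow contracts or expands horocycle arcs at an exponential rate. First I would fix an arbitrary point $x \in X$ and its horocycle orbit closure $K = \overline{xU}$, which is a nonempty closed $U$-invariant subset, and the goal becomes showing $K = X$.

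The core of the argument is to show that $K$ is invariant under a larger subgroup than $U$ alone. The mechanism is the standard one: using compactness of $X$, I would produce recurrence for the geodesic flow, and then combine recurrence with the renormalization identity to ``thicken'' the horocycle orbit in the transverse (geodesic) direction. Concretely, if two points $y$ and $y a_s$ both lie near a common accumulation point, the relation $a_{-s} u_t a_s = u_{t e^{-s}}$ lets one compare horocycle segments through the two points; letting the geodesic time $s \to \infty$ along a recurrence sequence forces $K$ to contain points displaced from $xU$ in the geodesic direction, and hence to be invariant under the geodesic flow as well. Once $K$ is invariant under both $U$ and the diagonal group, it is invariant under the full upper-triangular group $B$, and since $B U$-invariance together with the nonabelian structure of $\PSL$ generates all of $\PSL$ (the minimal parabolic together with its opposite, or equivalently the action becoming transitive), one concludes $K = X$ by the transitivity of the right $\PSL$-action on the homogeneous space.

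The main obstacle I anticipate is making the ``thickening'' step rigorous without invoking Ratner's theorem: one must extract the right recurrence sequence and control the error terms in the group multiplication so that the limiting displacement is genuinely transverse to $U$ and nondegenerate, rather than collapsing back into the horocycle direction. This requires a careful quantitative use of compactness (uniform recurrence, or a minimal-set argument picking $x$ so that $K$ is itself minimal), together with the observation that the accumulated displacements cannot all lie in $U$ because the geodesic expansion acts nontrivially transverse to $U$. A clean way to organize this is to first pass to a minimal closed $U$-invariant set $K$, show directly that $K$ must be invariant under a one-parameter transverse subgroup, and then bootstrap to full $\PSL$-invariance; minimality of the chosen set streamlines the limiting arguments and avoids pathologies.

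I expect the authors' proof to follow essentially this Hedlund-type outline, and the subsequent sections of the paper should generalize precisely this transverse-thickening mechanism to the product $\PSL \times G$, where the extra factor $G$ is handled by tracking how the recurrence and renormalization interact with the (trivially acted-upon) second coordinate.
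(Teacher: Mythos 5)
Your outline reproduces the skeleton of the paper's argument (which is Ghys's proof, recovered here as the case of trivial $G$ in Theorem~1): pass to a nonempty closed $U$-minimal set $\M$, use recurrence to produce returning group elements close to the identity, sandwich them between unipotents via the renormalisation identity to obtain diagonal elements in the limit, and conclude that $\M$ is $D$-invariant, hence $B$-invariant. But the two steps you leave vague are precisely the two places where cocompactness must do real work, and as written both are gaps. First, your concluding step is incorrect: since $U\subset B$, invariance of $\M$ under $U$ and under the diagonal group yields only $B$-invariance, and $B$ together with $U$ generates $B$, not $\PSL$; you have not produced invariance under the opposite unipotent, so ``transitivity of the $\PSL$-action'' concludes nothing. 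What is actually needed is the separate fact that the right $B$-action on $\Gamma\rs\PSL$ is itself minimal, which by duality is the statement that the cocompact Fuchsian group $\Gamma$ acts minimally on $\partial\Hy$ (its limit set is the whole boundary). In the paper this is Proposition~2.6, proved with genuine hyperbolic-geometry input (Lemma~2.2 and the Classification Lemma~2.5); it is an independent ingredient, not a formal consequence of the group structure.

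Second, the ``main obstacle'' you name --- that the limiting transverse displacement is nondegenerate --- is not resolved by observing that ``the geodesic expansion acts nontrivially transverse to $U$.'' The danger is that all the returning elements $f_n$ lie in $B$ (vanishing lower-left entry), in which case conjugating by unipotents produces nothing outside $B$ and the thickening fails. The paper excludes this in Lemma~2.8 by showing that infinitely many upper-triangular returns would force $\Gamma$ to contain a semi-parabolic element, and semi-parabolic elements are ruled out in a cocompact quotient because they create divergent $D^+$-orbits (Lemma~2.3 and Proposition~2.4). This is exactly the point at which the theorem fails for non-uniform lattices, where $U$-orbits can be periodic, so some version of the no-parabolics argument is indispensable and cannot be replaced by a heuristic about expansion rates. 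Until these two points are supplied, the proposal is an accurate road map but not a proof.
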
 

\noindent
On the contrary, if $X$ is not compact,  M. Kulikov \cite{Ku} constructed an infinitely generated Fuchsian group without 
non-empty $U$-minimal sets. In the case of non uniform lattices of $\PSL$, like the modular subgroup $PSL(2,\mathbb{Z})$, the $U$-orbits are dense or periodic. Actually, it is known from \cite{D} that the $U$-action on $X$  is minimal if and only if $X$ is compact. 
\medskip 

When $G$ is not trivial, even assuming $X$ is compact, the $U$-action may be non minimal. This is the case for example when $G = \PSL$ and $\Gamma$ is the product of two cocompact Fuchsian groups. However, in this setting, we prove the following criterion: 

\begin{itheorem} \label{thm1}
Let $G$ be a connected Lie group and $\Gamma$ be a discrete cocompact subgroup of $\PSL \times G$. Then the right $U$-action on 
$X = \Gamma \rs \PSL \times G$ is minimal if and only if the corresponding $\PSL$-action is minimal.
\end{itheorem}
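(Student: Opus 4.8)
We argue the two implications separately. Write $A=\{\,a_s=\matrice{e^{s/2}}{0}{0}{e^{-s/2}} : s\in\R\,\}$ for the diagonal subgroup, $\bar U=\{\,\bar u_c=\matrice{1}{0}{c}{1}:c\in\R\,\}$ for the opposite unipotent subgroup, and $u_t=\matrice{1}{t}{0}{1}$, so that $B=AU$ and $\PSL=\langle U,A,\bar U\rangle$. The easy implication is immediate: since $U\subset\PSL$, every $U$-orbit sits inside a $\PSL$-orbit, so if each $U$-orbit is dense then a fortiori each $\PSL$-orbit is dense, i.e. $U$-minimality forces $\PSL$-minimality. For the converse, assume the $\PSL$-action on the compact space $X$ is minimal. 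By compactness and Zorn's lemma the $U$-flow admits a nonempty closed $U$-minimal set $M\subseteq X$, and it suffices to prove that $M$ is $\PSL$-invariant: then $M$ is a nonempty closed $\PSL$-invariant set, whence $M=X$ by $\PSL$-minimality, so that $X$ itself is $U$-minimal and the $U$-action is minimal. As $\PSL=\langle U,A,\bar U\rangle$ and $B=AU$ already normalizes $U$, the task is to upgrade the $U$-invariance of $M$ first to $A$-invariance and then to $\bar U$-invariance.

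The central step is to show that $M$ is invariant under the geodesic flow $A$. The commutation relation $a_s u_t a_{-s}=u_{e^{s}t}$ is decisive: for every $s$ the set $M\,a_s$ is again $U$-invariant, since $M a_s u_t=M(a_su_ta_{-s})a_s=Mu_{e^st}a_s=Ma_s$, and it is closed, hence a union of $U$-minimal sets. The plan is to run the elementary Hedlund expansion argument (available for trivial $G$ as the quoted theorem): using that the forward geodesic flow expands $U$-orbits and that $X$ is compact, a recurrence argument forces $M\,a_s=M$ for all $s$, so that the $U$-minimal set is $A$-invariant. The point that makes the transplant to $\PSL\times G$ work is that $U$ and $A$ lie in $\PSL\times\{e\}$ and therefore act trivially on the factor $G$: the whole expansion takes place in the $\PSL$-direction while the $G$-coordinates are carried along passively, so the argument is insensitive to the presence of $G$. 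This yields that $M$ is $B$-invariant.

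Granted $B$-invariance, the Bruhat decomposition reduces the remaining work to the opposite horocycle. The big cell $B\bar U$ is open and dense in $\PSL$, and $M\cdot B=M$ gives $M\cdot(B\bar U)=M\bar U$; since $M\cdot\PSL$ is $\PSL$-invariant its closure equals $X$ by minimality, and density of $B\bar U$ then forces $\overline{M\bar U}=\overline{M\cdot\PSL}=X$. It remains only to promote this to genuine $\bar U$-invariance of $M$, and I expect this to be the main obstacle. The natural identity, coming from $a_s\bar u_c a_{-s}=\bar u_{ce^{-s}}$, is $x\,\bar u_c=(x\,a_{-s})\,\bar u_{c e^{-s}}\,a_s$: as $s\to+\infty$ the middle factor tends to the identity and $x a_{-s}\in M$ by $A$-invariance, so $x\bar u_c$ is the image under $a_s$ of a point lying ever closer to $M$. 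The difficulty is that right translations on $X$ are \emph{not} isometric — unipotent perturbations diverge only polynomially while $a_s$ expands exponentially — so one cannot pass to the limit directly; this is precisely the subtlety already present in Hedlund's theorem. The resolution I anticipate is a genuine recurrence argument: choose $s_n\to\infty$ along which $x\,a_{-s_n}$ returns close to a well-chosen point of the compact minimal set $M$, control the opposite-horocycle perturbation there, and pass to the limit so that $x\bar u_c$ is caught in $M$. Once $M$ is shown to be $\bar U$-invariant it is $\PSL$-invariant, and the reduction above closes the proof.

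Thus the only nontrivial content is the pair of upgrading steps for the $U$-minimal set, with the $A$-invariance (Hedlund-type expansion, now with an inert factor $G$) and the final $\bar U$-invariance (the non-isometric recurrence argument) being the two places where the real work lies; minimality of the $\PSL$-action enters precisely to convert $B$-invariance into the density $\overline{M\bar U}=X$ that feeds the last step.
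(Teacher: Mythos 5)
Your overall architecture --- take a nonempty $U$-minimal set $M$, upgrade it to $B$-invariance by a shearing/return argument in which the $G$-factor is inert, then use the hypothesis to conclude $M=X$ --- is the right one, and the first half is essentially the paper's strategy. But the two ``upgrading steps'' you single out as the real work are left as declared intentions, and the second one is a genuine gap, not a detail. Even for the $A$-invariance step, note what the ``Hedlund expansion'' actually requires: $U$-recurrence of $x=\Gamma h\in M$ produces return elements $h_n=(f_n,g_n)\to (Id,e)$ in the closed, bi-$U$-invariant set $H_\M=\{h'\,/\,\M h'\cap\M\neq\emptyset\}$, and one shears $f_n$ by unipotents on both sides into $\matrice{\alpha}{0}{c_n}{\alpha^{-1}}$ with $c_n\to 0$. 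This only works if the lower-left entries $c_n$ are eventually nonzero, i.e.\ $f_n\notin B$, and the paper must prove this (Lemma~\ref{hn}) from the fact that a cocompact $\Gamma\subset\PSL\times G$ contains no semi-parabolic elements (Lemma~\ref{splemma}, Proposition~\ref{semiparabolic}) --- a point your sketch never identifies and which needs a genuine (if short) argument involving the $G$-component. That gap is fillable; the next one is not, as stated.

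From $B$-invariance of $M$ you propose to derive $\bar U$-invariance via $x\bar u_c=(xa_{-s})\bar u_{ce^{-s}}a_s$, but this identity gains nothing: pushing the small perturbation $\bar u_{ce^{-s}}$ across $a_s$ re-expands it to exactly $\bar u_c$, so the error in the $\bar U$-direction is restored at precisely the rate at which it was contracted, and no choice of recurrence times $s_n$ along the $A$-orbit changes this. Your ``anticipated recurrence argument'' is the missing idea, and the density $\overline{M\bar U}=X$ you extract from $\PSL$-minimality is far too weak to substitute for it. The paper avoids this step entirely: it proves beforehand (Propositions~\ref{charactB} and~\ref{cocompactB}) that for cocompact $\Gamma$ the right $B$-action on $X$ is minimal as soon as the $\PSL$-action is, by passing to the dual $\Gamma$-action on $\partial\Hy\times G$ and using genuine hyperbolic geometry --- the north--south dynamics of unbounded sequences in $p_1(\Gamma)$ on $\partial\Hy$ (Lemma~\ref{keylemma}) together with the classification of $\overline{p_1(\Gamma)}$ (discrete cocompact or dense, the conjugates into $PSO(2,\R)$ or $B$ being excluded by compactness and the absence of semi-parabolics). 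That boundary argument is where the hypothesis of $\PSL$-minimality actually does its work, and it is the piece your proposal omits. Until you either supply a real proof of $\bar U$-invariance of $M$ or establish $B$-minimality by some such global argument, the proof is incomplete.
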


\noindent
Our proof of Theorem~\ref{thm1} does not use Ratner's famous Orbit-Closure Theorem \cite{R},  see also \cite{G2} and \cite{KSS} for an overview. 
In fact, some ideas will be applied in a non-homogeneous context.
\medskip

In the second part of this paper, we adopt a foliation point of view, which is natural in the previous context. For any connected Lie group $G$, 
the horizontal foliation of  $\PSL\times G$ by the fibres of the projection on the second factor $G$ is invariant by the action of $\Gamma$ and so 
induces a foliation on $X = \Gamma \rs \PSL\times G$ whose leaves are the orbits of the right $\PSL$-action. In fact, this action gives rise to a {\em $G$-Lie foliation} as defined in \cite{G} and \cite{Mo2}. As stated in a theorem by E. F\'edida \cite{F}, such a foliation is characterised as follows. Given a discrete group $\Gamma$ acting freely and properly discontinuously on a smooth manifold $\widetilde{M}$, a group homomorphism $h : \Gamma \to G$ and a locally trivial smooth fibration $\rho : \widetilde{M} \to G$ with connected fibres that is $\Gamma$-equivariant (i.e. $\rho(\gamma x)=h(\gamma)\rho(x)$ for all $\gamma \in \Gamma$ and for all $x \in \widetilde{M}$), the foliation $\widetilde{\F}$ by the fibres of $\rho$ induces a foliation $\F$ of $M = \Gamma \rs \widetilde{M}$, called {\em $G$-Lie foliation}, whose leaves are quotients of the fibres of $\rho$ by the kernel of $h$. 
\medskip 

Assume $\widetilde{M}$ is a connected Lie group $H$ equipped with a surjective morphism $\rho : H \to G$ and $\Gamma$ is a cocompact discrete subgroup of $H$. Like before, we obtain a $G$-Lie foliation on the homogeneous manifold $M = \Gamma \rs H$ whose leaves are the orbits of the right action of the kernel $K$ of $\rho$. Namely, they are diffeomorphic to $K \cap \Gamma \rs K$. Given a compact subgroup $K_0$ of $K$, we can modify this construction by considering $\widetilde{M} = H/K_0$ and $M = \Gamma \rs H / K_0$. According to \cite{G1}, any $G$-Lie foliation constructed by this method is called {\em homogeneous}. 
\medskip 

Let $\F$ a $G$-Lie foliation on a compact manifold $M$. When the leaves of $\F$ are equipped with a complete Riemann metric induced by a Riemann metric on $M$, we can define the {\em unit tangent bundle} $X = T^1\F$ of $\F$ as the vector bundle whose fibre $T^1_x \F$ at $x \in M$ is the unit tangent space  $T^1_x L_x$ to the leaf $L_x$ passing through $x$.
We say $\F$ is a $G$-Lie foliation by {\em hyperbolic surfaces} if the leaves of $\F$ are two-dimensional and the manifold $M$ is endowed with a complete Riemannian metric 
 whose restriction to each leaf 
has hyperbolic conformal type. 
Actually, according to 
\cite{C} and \cite{V}, we can assume (up to multiplication by a continuous function) that 
each restriction has constant negative curvature, namely each leaf is a hyperbolic surface. Once each leaf $L$ has a hyperbolic structure, its unit tangent bundle $T^1 L$ becomes diffeomorphic to the quotient of $\PSL$ by a discrete torsion-free subgroup. The transitive smooth right $\PSL$-action on $T^1 L$ extends to a leafwise smooth continuous right $\PSL$-action on $X = T^1\F$.  Notice that the
unit tangent bundles of the leaves of a $G$-Lie foliation $\F$ are always the leaves of a $G$-Lie foliation of $X = T^1\F$ which has the same transverse structure than $\F$. If $\F$ is a foliation by hyperbolic surfaces, this foliation is given by the continuous right $\PSL$-action described above. 
\medskip

In the situation described in Theorem~\ref{thm1}, the homogeneous manifold $X = \Gamma \rs \PSL \times G$ is the unit tangent bundle of the 
homogeneous $G$-Lie foliation $\F$ by hyperbolic surfaces on $M = \Gamma \rs \PSL/PSO(2,\R) \times G$ which is obtained where $H = \PSL \times G$, $K = \PSL$ is the kernel of the second projection $p_2 : \PSL \times G \to G$ and $K_0 = PSO(2,\R)$ is the compact stabiliser of $z=i$ for the $\PSL$-action on $\Hy$. 
Thus, from Theorem 1, we can derive the following generalisation of Hedlund's Theorem in the spirit of the work of M. Mart\'{\i}nez and  A. Verjovsky \cite{MV} on which we comment below: 

\begin{itheorem} \label{thm2}
 Let $X = T^1\F$ be the unit tangent bundle of a homogeneous $G$-Lie foliation $\F$ by hyperbolic surfaces of a compact manifold. If $\mathcal {F}$ is minimal, then the right $U$-action on $X$ is minimal. 
\end{itheorem}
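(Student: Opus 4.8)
The plan is to reduce Theorem~\ref{thm2} to Theorem~\ref{thm1} by showing that the unit tangent bundle $X = T^1\F$ of a homogeneous $G$-Lie foliation by hyperbolic surfaces is, up to the natural identification, a homogeneous space of the form $\Gamma \rs \PSL \times G$, and that under this identification the hypothesis ``$\F$ minimal'' translates into ``the leafwise right $\PSL$-action on $X$ is minimal''. Theorem~\ref{thm1} then closes the gap between the $\PSL$-action and the $U$-action.

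First I would unwind the homogeneous construction recalled in the excerpt. By definition $M = \Gamma \rs H / K_0$, where $H$ is a connected Lie group, $\rho : H \to G$ is surjective with kernel $K$, the subgroup $\Gamma < H$ is cocompact and discrete, and $K_0 < K$ is compact; the leaves of $\F$ are the images of the right $K$-orbits. Since the leaves are hyperbolic surfaces, the leaf through $\Gamma g K_0$ is $\Lambda \rs K / K_0$ with $\Lambda = g^{-1}\Gamma g \cap K$ discrete and torsion-free, so $\dim K - \dim K_0 = 2$ and $K/K_0 \cong \Hy$. Passing to the unit tangent bundle amounts to dropping the $K_0$-quotient, so $X = T^1\F \cong \Gamma \rs H$ and each leaf $T^1 L$ is a right $K$-orbit $\Lambda \rs K$. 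On the other hand, the excerpt furnishes a leafwise transitive right $\PSL$-action on $X$ identifying this same leaf with a quotient $\Lambda' \rs \PSL$. Comparing the two transitive, almost-free actions on a single leaf forces $K \cong \PSL$, with $K_0$ corresponding to $PSO(2,\R)$.

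The key structural step is then to pin down $H$ itself. We have a short exact sequence $1 \to \PSL \to H \to G \to 1$ of connected Lie groups. Because $\PSL$ is simple with trivial centre and with $\mathrm{Out}(\PSL)$ trivial, the conjugation homomorphism $H \to \mathrm{Aut}(\PSL)$ has connected image, hence lands in the inner automorphisms $\mathrm{Inn}(\PSL) \cong \PSL$; writing $C = C_H(K)$ for the centraliser of $K$ in $H$, the triviality of $Z(K)$ gives $K \cap C = \{e\}$ and $H = K\,C$, so $H \cong \PSL \times G$ with $G \cong C$. Thus $X \cong \Gamma \rs \PSL \times G$ and we are exactly in the setting of Theorem~\ref{thm1}. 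I expect this extension-splitting argument to be the main obstacle: one must check that the abstract group-theoretic splitting is compatible with the Lie and foliated structures, and, upstream of it, make rigorous the identification $K \cong \PSL$ (in particular ruling out nontrivial covers of $\PSL$, where the centre is no longer trivial).

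It remains to match the two notions of minimality. The bundle projection $\pi : X = T^1\F \to M$ is a circle bundle whose fibre $\pi^{-1}(m)$ lies inside the single $\PSL$-orbit $T^1 L_m$ over the leaf $L_m$ through $m$. Hence, given a leaf $L$ of $\F$, the closure $\overline{T^1 L}$ is $\PSL$-saturated, and its image $\pi(\overline{T^1 L})$ is a closed union of leaves containing $L$; if $\F$ is minimal this image is all of $M$. Since $\overline{T^1 L}$ contains the full $\PSL$-orbit $T^1 L'$ as soon as it meets one, and since it meets every fibre, we get $\pi^{-1}(m) \subset \overline{T^1 L}$ for every $m$, i.e. $\overline{T^1 L} = X$. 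Therefore $\F$ minimal implies that the right $\PSL$-action on $X$ is minimal, and Theorem~\ref{thm1} then yields the minimality of the right $U$-action, completing the proof.
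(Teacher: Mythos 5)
Your proposal is correct and follows the same overall reduction as the paper: identify $X = T^1\F$ with a homogeneous space $\Gamma \rs \PSL \times G$, check that minimality of $\F$ is equivalent to minimality of the leafwise right $\PSL$-action on $X$, and invoke Theorem~\ref{thm1}. The one place where you genuinely diverge is the structural step $H \cong \PSL \times G$, which the paper isolates as Proposition~\ref{homogeneous}: there the splitting of $1 \to \PSL \to H \to G \to 1$ is obtained at the Lie algebra level ($\mathfrak{h} = \mathfrak{sl}(2,\R) \oplus \mathfrak{g}$, by H.~Cartan's theorem on semisimple ideals), then integrated on the universal cover and pushed back down using $\pi_1(H) \cong \pi_1(\PSL)$. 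Your argument stays at the group level: since $H$ is connected, its conjugation representation on the centreless normal subgroup $K \cong \PSL$ lands in $\mathrm{Inn}(K)$, whence $H = K \cdot C_H(K)$ with $K \cap C_H(K) = Z(K) = \{e\}$ and $C_H(K) \cong G$; this is arguably cleaner, as it avoids passing to covers. Note only that your parenthetical claim that $\mathrm{Out}(\PSL)$ is trivial is false (it has order two), but your argument never uses it --- connectedness of the image in $\mathrm{Aut}(K)$ is what does the work. Both you and the paper take the identifications $K \cong \PSL$ and $K_0 \cong PSO(2,\R)$ essentially for granted (the paper simply declares that one ``can assume'' this in its definition of homogeneous foliation by hyperbolic surfaces); you at least flag the point. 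Your closing argument that minimality of $\F$ forces $\PSL$-minimality of $X$ via the circle-bundle projection is correct and fills in the paper's one-line assertion that the two actions share the same holonomy representation.
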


\noindent
Note, however, that there are $G$-Lie foliations  which are not homogeneous \cite{HMM}.
A natural question arises when we replace $G$  with the quotient $G/G_0$ by a closed Lie subgroup $G_0$: does Theorem~\ref{thm2} remains valid for these more general foliations? They are {\em transversely homogeneous foliations} \cite{Bu} whose structure can be described in a similar way to that  of the $G$-Lie foliations. If $G_0$ is compact, we can construct by averaging a left-invariant Riemannian metric on $G$ that is also invariant by the right action of $G_0$. Then the distance between two $\PSL$-orbits in $X = \Gamma \rs \PSL \times G/G_0$ remains locally constant and therefore the right $\PSL$-action on $X$ defines a {\em Riemannian foliation} according to \cite{Mo1} and \cite{Mo2}. As in the Lie case, the homogeneous manifold 
$X = \Gamma \rs \PSL \times G/G_0$ is the unit tangent bundle of a homogeneous Riemannian foliation $\F$ constructed on the compact manifold
$M = \Gamma \rs \PSL/PSO(2,\R) \times G/G_0$. 
Using Molino's theory \cite{Mo1}, we extend Theorem~\ref{thm2} to this context in Corollary~\ref{corthm2}.
\medskip

In the third part of this paper, we show that, on the contrary, Theorem~\ref{thm2} does not hold for general transversely homogeneous foliations where $G_0$ is a non-compact closed Lie subgroup of $G$: 

\begin{itheorem} \label{thm3}
There are minimal transversely homogeneous foliations on compact manifolds such that neither the $U$-action nor the $B$-action on its unit tangent bundles are minimal. Moreover, there is such an example admitting a unique $B$-minimal set which is not $U$-minimal. 
\end{itheorem}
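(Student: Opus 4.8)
The plan is to construct an explicit example built from a solvable Lie group, which the abstract has already flagged as the instructive case $T^3_A$. I would take $H = \PSL \times G$ where $G$ is a suitable solvable group — the natural candidate is the group $\A$ of orientation-preserving affine transformations of $\R$, or a semidirect product $\R^2 \rtimes_A \R$ built from a hyperbolic matrix $A \in SL(2,\Z)$, whose suspension is the solvmanifold $T^3_A$. The non-compact subgroup $G_0$ to quotient by would be chosen so that $G/G_0$ carries a transversely homogeneous structure but no $G_0$-invariant metric, precisely defeating the averaging trick that rescued the compact case. The first step is to verify that the resulting foliation $\F$ on the compact manifold $M = \Gamma \rs \PSL/PSO(2,\R) \times G/G_0$ is minimal; this reduces to checking that the holonomy group acts minimally on the homogeneous transversal $G/G_0$, which for the affine/solvable model follows from density of the relevant orbit closures.

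Next I would analyze the $U$- and $B$-actions on $X = \Gamma \rs \PSL \times G/G_0$ directly. The key structural point is that, unlike the Riemannian case, the distance between $\PSL$-orbits is no longer locally constant: a non-compact $G_0$ allows transverse contraction or expansion, so the dynamics transverse to the leaves are genuinely hyperbolic rather than isometric. I would exhibit, within the affine model, a proper closed $B$-invariant subset — typically the preimage of a single closed $G_0$-coset orbit, or a stable/unstable leaf of the transverse hyperbolic structure — and show it is invariant and proper, hence $B$ is not minimal; then since every $U$-orbit closure contains a $B$-orbit (as $B = UA$ with $A$ the diagonal), non-minimality of $B$ propagates to $U$. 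For the final assertion I would argue that the transverse hyperbolicity forces a unique attracting $B$-orbit closure: the $A$-action on the transversal $G/G_0$ contracts everything toward a single fixed coset, making the associated $B$-minimal set unique, while the horocyclic $U$-flow fails to mix across this attractor and so admits a strictly smaller minimal set inside it.

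The main obstacle I anticipate is the last clause: producing one example with a \emph{unique} $B$-minimal set that is nonetheless \emph{not} $U$-minimal. Mere non-minimality of both flows is easy once the transverse dynamics are hyperbolic, but the uniqueness of the $B$-minimal set requires showing the transverse $A$-action has a single attractor while simultaneously the $U$-dynamics inside that attractor fail to be minimal — two conditions pulling in opposite directions. I expect to resolve this by choosing the solvable model so that the transversal $G/G_0$ is one-dimensional (e.g. a horocyclic or radial coordinate) on which $A$ acts as a contraction with a unique fixed point, while the fibre over that fixed point is itself a $\PSL$-space on which $U$ acts with a proper orbit closure inherited from the base lattice. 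Verifying that the candidate set is simultaneously the unique $B$-minimal set and strictly contains a proper $U$-invariant closed subset will be the delicate computation, and I would do it by explicit coordinates on $T^3_A$ rather than invoking any general structure theorem.
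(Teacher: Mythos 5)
You have correctly singled out the paper's decisive example: the solvmanifold $T^3_A$ obtained by suspending a hyperbolic $A\in SL(2,\Z)$, viewed as a transversely affine foliation with model $\A$ acting on $\R\cong\A/\R^+_\ast$; and your overall strategy (minimality of $\F$ from irrationality of the eigendirections, hyperbolic transverse dynamics, a unique attracting $B$-minimal set, small $U$-orbit closures) is the one the paper follows. But there is a genuine misstep in where you locate the proper closed $B$-invariant set. You propose the ``preimage of a single closed $G_0$-coset orbit'' in the transversal $G/G_0$, or a stable/unstable leaf of the transverse contraction. Since $\F$ is minimal, the holonomy acts minimally on the transversal $\R$, so no proper nonempty closed invariant subset of $\R$ exists, and the preimage of any transversal orbit is a union of dense leaves: this candidate can never be closed and proper. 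The obstruction to $B$-minimality does not live in the transverse direction at all. By duality the $B$-action on $X$ is the $\Gamma$-action on $\partial\Hy\times\R=\PSL/B\times G/G_0$, and the relevant fact is that $p_1(\Gamma)$ is \emph{solvable} (generated, in the eigenbasis, by $z\mapsto z+a'$, $z\mapsto z+c'$ and $z\mapsto\lambda z$), hence has the common fixed point $\infty\in\partial\Hy$. The proper closed invariant set is $\{\infty\}\times\R$ --- a fibre over a point of $\PSL/B$, not over a point of the transversal --- and the corresponding $\M\subset T^1\F$ is a section-like set projecting onto all of $M$. Relatedly, your justification that ``every $U$-orbit closure contains a $B$-orbit'' is false in this very example ($U$-orbit closures are the toroidal fibres of $T^3_A\to S^1$, while $B$-orbits in $\M$ are dense leaves); fortunately you do not need it, since $U\subset B$ gives $\overline{xU}\subseteq\overline{xB}$, so non-minimality passes from $B$ to $U$ for free.

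Once the invariant set is correctly identified, the two verifications you flag as delicate are exactly the ones the paper carries out, and they are short. Uniqueness of the $B$-minimal set follows from $\lim_{n\to+\infty}h_{A\ast}^n(\xi,y')=(\infty,0)$ for every $\xi\neq 0$ together with $\overline{\Gamma(0,y')}=\partial\Hy\times\R$, so every nonempty closed $\Gamma$-invariant set contains $\{\infty\}\times\R$, on which the translation part of the holonomy acts minimally because $b'$ and $d'$ are linearly independent over $\Z$ (the irrational-slope lemma for the eigenlines of $A$). Failure of $U$-minimality of $\M$ is then read off from the suspension structure: the $U$-orbits are the lines in the expanding eigendirection, whose closures are the toroidal fibres of $T^3_A\to S^1$, proper closed $U$-invariant subsets of $\M$. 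Finally, note that the paper also gives a second, independent example for the first clause of the theorem ($\Gamma\rs\PSL\times\partial\Hy$ with $\Gamma$ cocompact Fuchsian acting diagonally), in which the unique $B$-minimal set coincides with the unique $U$-minimal set; your single example establishes both clauses at once, so omitting that example is not a gap, but the points above must be repaired before your plan becomes a proof.
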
 

As mentioned before, the problem of generalising Hedlund's Theorem for compact foliated manifolds by hyperbolic surfaces has been discussed by M. Mart\'{\i}nez and  A. Verjovsky in several versions of their article \cite{MV}. Theorems~\ref{thm1}~and~\ref{thm2} give an affirmative answer to the initial conjecture for homogeneous Lie foliations, also valid for homogeneous Riemannian foliations, while Theorem~\ref{thm3} gives a negative answer in the transversely homogeneous setting. Nevertheless, according to the second version of \cite{MV}, the question can be reformulated in the following way:

\begin{namedquestion}[Mart\'{\i}nez-Verjovsky]
Let $X = T^1\F$ be the unit tangent bundle of a compact foliated manifold whose leaves are hyperbolic surfaces. Is it true that the right $U$-action on $X$ is minimal if and only if the right $B$-action is minimal?
\end{namedquestion}

\noindent
We complete the paper with some comments on this question.

\subsection*{Acknowledgements} We thank Bertrand Deroin and Ga\"el Meigniez for their accurate comments.  This work has been partially supported by the Ministry of Science and Innovation - Government of Spain (Grant MTM2010-15471) and IEMath Network CN 2012/077.

\section{Proof of Theorem~\ref{thm1}} \label{Section1}

Let $G$ be a connected Lie group. Let $\Gamma$ be a discrete subgroup of the Lie group $H = \PSL \times G$ acting on $H$ by left  translation.  We denote by $p_1$ and $p_2$ the first and second projection  of $H = \PSL \times G$ onto $\PSL$ and $G$ respectively. Any subgroup $F$ of $\PSL$ acts on the quotient $X = \Gamma \rs \PSL \times G$ by right translation
$$
 \Gamma (f,g)f' = \Gamma (ff', g) 
$$
for all $(f,g) \in H$ and for all $f' \in F$. In the following, we will replace $F$ with $U$, $B$ or $\PSL$. By duality, the right $F$-action on $X$ is minimal if and only if the action of $\Gamma$ on the quotient $\PSL/F \times G$ by left translation is minimal. In particular, the right $\PSL$-action on $X$ is minimal if and only if $\overline{p_2(\Gamma)} = G$. If the right $F$-action is minimal, then $p_1(\Gamma)$ acts minimally on 
$\PSL/F$ and $\overline{p_2(\Gamma)} = G$. For $F = B$, we prove: 

\begin{proposition} \label{charactB}  Let $G$ be a connected Lie group and let $\Gamma$ be a discrete subgroup of $H = \PSL \times G$. Then the right $B$-action on $X$ is minimal if and only if the following two properties hold: 
\begin{list}{\labelitemi}{\leftmargin=5pt}

\item[(i)]  $p_1(\Gamma)$ acts minimally on 
$\PSL/B$, 

\item[(ii)] $\overline{p_2(\Gamma)} = G$, or equivalently the right $\PSL$-action on $X$ is minimal. 
\end{list}
\end{proposition}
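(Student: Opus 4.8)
The plan is to work on the dual side. By the duality recorded above, the right $B$-action on $X$ is minimal if and only if $\Gamma$ acts minimally by left translations on $\PSL/B \times G$, and since $B$ is the stabiliser of a boundary point, $\PSL/B$ is the circle $\partial\Hy$ on which $p_1(\Gamma)$ acts by Möbius transformations while $p_2(\Gamma)$ translates the $G$-factor. The forward implication is already noted in the text (if the action is minimal then (i) and (ii) hold), so I would only prove that (i) and (ii) force minimality. Fix an arbitrary orbit closure $Z = \overline{\Gamma\cdot(\xi_0,g_0)}$ in $\partial\Hy\times G$; it is closed and $\Gamma$-invariant, and $p_2(Z)\supseteq p_2(\Gamma)g_0$ is dense in $G$ by (ii). The goal is $Z = \partial\Hy\times G$. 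Writing $Z_\eta = \{g : (\eta,g)\in Z\}$ for the fibres, the key reduction is that the locus $\mathcal E = \{\eta : Z_\eta = G\}$ of \emph{full fibres} is closed (limits of full fibres are full, since $Z$ is closed) and $p_1(\Gamma)$-invariant (because $p_2(\gamma)G = G$). By hypothesis (i) the group $p_1(\Gamma)$ acts minimally on $\partial\Hy$, so it suffices to produce \emph{one} full fibre: then $\mathcal E$ is a nonempty closed invariant set, hence all of $\partial\Hy$, and $Z$ is everything.

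The engine for producing a full fibre is the contraction property of the boundary action, which I would record as a lemma: any sequence tending to infinity in $\PSL$ admits a subsequence converging, uniformly on compact subsets of $\partial\Hy\setminus\{r\}$, to the constant map $\eta\mapsto a$, for some points $a,r\in\partial\Hy$ (immediate from the $KAK$ decomposition). Since (i) forces $\Gamma$ infinite and $e\in\overline{p_2(\Gamma)}$, I can choose infinitely many distinct $\gamma_n\in\Gamma$ with $p_2(\gamma_n)\to e$; discreteness of $\Gamma$ in $\PSL\times G$ then forces $p_1(\gamma_n)\to\infty$, so after passing to a subsequence $p_1(\gamma_n)$ contracts to a constant $a$ off a point $r$. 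Applying $\gamma_n$ to any $(\xi,w)\in Z$ with $\xi\neq r$ gives $(p_1(\gamma_n)\xi,\,p_2(\gamma_n)w)\to(a,w)$, whence $(a,w)\in Z$. In other words $\{a\}\times p_2(Z')\subseteq Z$, where $Z' = Z\cap\big((\partial\Hy\setminus\{r\})\times G\big)$.

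It remains to fill a fibre, and a dichotomy on $p_2(Z')$ closes the argument. If $p_2(Z')$ is dense in $G$, then $\{a\}\times p_2(Z')\subseteq Z$ and closedness give the full fibre $\{a\}\times G\subseteq Z$. Otherwise some nonempty open $O\subseteq G$ misses $\overline{p_2(Z')}$; as $p_2(\Gamma)g_0$ is dense, every $w\in O$ lying in $p_2(Z)$ must come from the fibre over $r$, so $O\subseteq Z_r$ and $Z_r$ has nonempty interior. I then claim no $\gamma\in\Gamma$ can satisfy both $p_1(\gamma)r\neq r$ and $p_2(\gamma)\in O O^{-1}$: such a $\gamma$ would send $O\subseteq Z_r$ into a fibre over a point $\neq r$, forcing $p_2(\gamma)O\subseteq p_2(Z')$, while $p_2(\gamma)\in OO^{-1}$ makes $p_2(\gamma)O$ meet $O$, contradicting $O\cap\overline{p_2(Z')}=\emptyset$. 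Consequently every $\gamma$ with $p_2(\gamma)\in OO^{-1}$ stabilises $r$; writing $\Gamma_r = \{\gamma\in\Gamma : p_1(\gamma)r = r\}$, density of $p_2(\Gamma)$ shows $\overline{p_2(\Gamma_r)}$ contains a neighbourhood of $e$, hence equals $G$ (a closed subgroup of the connected group $G$ containing a neighbourhood of $e$ is $G$). Since $\Gamma_r$ fixes $r$ it preserves $Z_r$, which is closed with nonempty interior, so $G\cdot Z_r = \overline{p_2(\Gamma_r)}\cdot Z_r\subseteq Z_r$ yields $Z_r = G$: again a full fibre.

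The main obstacle is precisely the coupling between the two factors: contracting the boundary coordinate toward $a$ requires letting $\gamma\to\infty$, which a priori drags the $G$-coordinate off to infinity. The device that breaks the coupling is to run the contraction along a sequence with $p_2(\gamma_n)\to e$ (available from (ii)), so that the $G$-coordinate is \emph{preserved} in the limit and the already-dense second coordinates of $Z$ are transported wholesale into a single fibre. The remaining delicate point is the bookkeeping around the exceptional repelling point $r$ — ensuring the transported second coordinates are dense, and handling the degenerate case where they are not — which is exactly what the dichotomy above resolves.
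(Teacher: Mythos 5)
Your proof is correct, and while it runs on the same engine as the paper's --- duality, the choice of a sequence $\gamma_n\in\Gamma$ with $p_2(\gamma_n)\to e$ and $p_1(\gamma_n)$ leaving every compact set, and the north--south contraction of such a sequence on $\partial \Hy$ (your $KAK$ lemma is exactly the paper's Lemma~\ref{keylemma}, there proved with geodesics instead) --- you finish in a genuinely different way. The paper argues pointwise on orbit closures: it conjugates $\gamma_n$ by elements $\gamma'=(f,g')\in\Gamma$ to replace the attracting/repelling pair $(\xi^+,\xi^-)$ by $(f(\xi^+),f(\xi^-))$, concludes for every starting point $\xi\notin p_1(\Gamma)\xi^-$, and must then treat the exceptional case $\xi\in p_1(\Gamma)\xi^-$ separately, invoking the dichotomy ``$p_1(\Gamma)$ dense or Fuchsian of the first kind'' to manufacture an auxiliary element whose powers push $\xi^+$ off the orbit of $\xi^-$. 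Your device of the full-fibre locus $\mathcal E=\{\eta:Z_\eta=G\}$, closed and $p_1(\Gamma)$-invariant, removes that case analysis entirely: it does not matter where the initial point sits relative to the repelling point, only that one fibre gets filled, and your dichotomy on the density of $p_2(Z')$ --- with the stabiliser $\Gamma_r$ and the closed-subgroup argument in the degenerate branch --- is self-contained and needs no structure theory for the closure of $p_1(\Gamma)$. The one spot to tighten: passing from ``every $w\in O$ lying in $p_2(Z)$ comes from the fibre over $r$'' to ``$O\subseteq Z_r$'' requires all of $O$ to lie in $p_2(Z)$; this is immediate because $\partial\Hy$ is compact, so $p_2(Z)$ is closed as well as dense, hence equal to $G$ (alternatively, $O\cap p_2(Z)$ is dense in $O$ and the fibre $Z_r$ is closed). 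With that sentence added the argument is complete.
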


\noindent
The proof of Proposition~\ref{charactB} uses hyperbolic geometry.  Let $\Hy = \{ z \in \C / Im \, z > 0 \}$ be the Poincar\'e half-plane equipped with the hyperbolic distance $d$. The action of $\PSL$ on $\Hy$ by isometries extends to a $\PSL$-action on its boundary 
$\partial \Hy = \R \cup \{\infty\}$. This action is conjugated to the right $\PSL$-action on $\PSL/B$. Since $\SL$ acts transitively on 
$\R^2 - \{0\}$ and $U$ is the stabiliser of the vector $e_1 = (1,0)$, the homogeneous manifolds $\PSL/U$ and  $\PSL/B$ are diffeomorphic to the linear space $E = \R^2 - \{0\}/\{\pm Id\}$ and the projective line $\mathbb{R}P^1$ respectively. Before we prove Proposition~\ref{charactB}, we state the following key lemma: 

\begin{lemma} \label{keylemma}
Let $\{ f_n\} _{n \geq 0}$ be a sequence of elements of $\PSL$. If for some $z \in \Hy$, there are points 
$\xi^+$ and $\xi^-$ in $\partial \Hy$ such that
$$
\lim_{n \to +\infty} f_n(z) = \xi^+  \quad \mbox{and} \quad \lim_{n \to +\infty} f_n^{-1}(z) = \xi^-,
$$
then for every point $\xi \neq \xi^-$ in $\Hy \cup \partial \Hy$, we have:
$$
\lim_{n \to +\infty} f_n(\xi) = \xi^+.
$$
\end{lemma}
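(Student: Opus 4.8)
The plan is to split the argument according to whether the test point $\xi$ lies in the interior $\Hy$ or on the boundary $\partial\Hy$, and to base everything on one elementary remark about the hyperbolic metric, which I will call the \emph{bounded-distance principle}: if $(p_n)$ is a sequence in $\Hy$ converging to a point $\eta \in \partial\Hy$ (in the topology of $\Hy \cup \partial\Hy$) and $(q_n)$ is a sequence with $\sup_n d(p_n,q_n) < \infty$, then $q_n \to \eta$ as well. This is true because a hyperbolic ball of fixed radius has Euclidean diameter tending to $0$ as its centre approaches $\partial\Hy$. For an interior point $\xi \in \Hy$ the conclusion is then immediate and does not even use $\xi^-$: since $f_n$ is an isometry, $d(f_n(\xi),f_n(z)) = d(\xi,z)$ is independent of $n$, so $f_n(\xi)$ stays within a fixed distance of $f_n(z) \to \xi^+$, and the principle gives $f_n(\xi) \to \xi^+$.

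The real content is the case of a boundary point $\xi \in \partial\Hy$ with $\xi \neq \xi^-$; here one cannot speak of $d(\cdot,\xi)$, and the hypothesis on $f_n^{-1}$ must be brought in. My device is to transport the problem by $f_n$. Consider the geodesic ray $\rho_n$ joining $f_n^{-1}(z)$ to $\xi$, so that $f_n(\rho_n)$ is the geodesic ray from $z$ to $f_n(\xi)$. Let $q_n$ be the nearest-point projection of $z$ onto $\rho_n$. Because $f_n$ is an isometry, $f_n(q_n)$ is the nearest-point projection of $f_n(z)$ onto the ray $[z,f_n(\xi)]$, and $d(f_n(z),f_n(q_n)) = d(z,q_n)$. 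The key feature is that $f_n(q_n)$ lies on the ray issuing from the \emph{fixed} point $z$ towards $f_n(\xi)$, so that controlling it controls the endpoint $f_n(\xi)$.

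The main obstacle is to show that $d(z,q_n)$ stays bounded, and this is precisely where $\xi \neq \xi^-$ is used. As $n \to \infty$ the ray $\rho_n$ has one endpoint $f_n^{-1}(z) \to \xi^-$ and the other endpoint fixed and equal to $\xi \neq \xi^-$, so the $\rho_n$ converge, uniformly on compact sets, to the complete geodesic with endpoints $\xi^-$ and $\xi$. Consequently the projection $q_n$ of the fixed point $z$ converges to the projection of $z$ onto that limiting geodesic, which lies at finite distance from $z$; hence $\sup_n d(z,q_n) < \infty$. Granting this, the bounded-distance principle applied to $f_n(q_n)$ and $f_n(z) \to \xi^+$ gives $f_n(q_n) \to \xi^+$, and since $f_n(q_n) \to \xi^+ \in \partial\Hy$ its distance to the fixed point $z$ tends to infinity. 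Thus $f_n(q_n)$ is a point on the ray from $z$ to $f_n(\xi)$ that recedes to infinity while converging to $\xi^+$, so the initial direction at $z$ of this ray converges to that of $[z,\xi^+)$; as $f_n(\xi)$ is the endpoint of the very same ray, I conclude $f_n(\xi) \to \xi^+$.

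I would single out the boundedness of $d(z,q_n)$ as the crux: it is the only place the second hypothesis and the assumption $\xi \neq \xi^-$ enter, and the statement genuinely fails otherwise (for $\xi = \xi^-$ the rays $\rho_n$ degenerate, the projection escapes to $\xi^-$, and the iterates of a hyperbolic element fixing $\xi^-$ give a counterexample). As an alternative route that avoids projections, one may write each $f_n$ in a Cartan decomposition $f_n = k_n a_{t_n} l_n$, with $k_n,l_n$ rotations fixing $i$ and $a_{t_n}$ diagonal; taking base point $i$ (legitimate by the interior case), the hypotheses force $t_n \to \infty$, and after passing to a subsequence with $k_n \to k$, $l_n \to l$ one identifies $k(\infty) = \xi^+$ and $l(\xi^-) = 0$. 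Then $\xi \neq \xi^-$ yields $l(\xi) \neq 0$, and the source-sink dynamics of $a_{t_n}$ on $\partial\Hy$ (with source $0$ and sink $\infty$) gives $f_n(\xi) = k_n a_{t_n} l_n(\xi) \to k(\infty) = \xi^+$.
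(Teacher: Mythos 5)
Your proof is correct. The interior case coincides with the paper's (both rest on $d(f_n(\xi),f_n(z))=d(\xi,z)$ together with the fact that hyperbolic balls of fixed radius shrink to points near $\partial\Hy$), but the boundary case is handled by a genuinely different device. The paper fixes an auxiliary geodesic $\alpha$ with ideal endpoints $\xi$ and $\xi'$, both chosen different from $\xi^-$, uses $d(z,f_n(\alpha(t)))=d(f_n^{-1}(z),\alpha(t))\to+\infty$ to conclude that the image geodesics $f_n\scirc\alpha$ escape every compact set and hence collapse to a single boundary point $\zeta$, and then identifies $\zeta=\xi^+$ by applying the interior case to $\alpha(t)\in\Hy$. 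You instead work with the ray from $f_n^{-1}(z)$ to $\xi$, whose image is anchored at the fixed basepoint $z$ and ends at $f_n(\xi)$, and control the nearest-point projection $q_n$ of $z$; the hypotheses $f_n^{-1}(z)\to\xi^-$ and $\xi\neq\xi^-$ enter through the boundedness of $d(z,q_n)$, which plays exactly the role that the divergence $d(z,f_n(\alpha(t)))\to+\infty$ plays in the paper. The two arguments are dual in flavour: the paper sends a fixed geodesic to infinity, you keep a moving ray at bounded distance from $z$. Yours needs the (standard) facts that rays with one endpoint tending to $\xi^-\neq\xi$ converge on compacta to the geodesic joining $\xi^-$ to $\xi$, and that the visual map at $z$ from directions to ideal endpoints is a homeomorphism; the paper's needs only that a family of geodesics leaving every compact set accumulates on a single boundary point. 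Your Cartan-decomposition alternative is a further, more algebraic route (the standard north--south dynamics of the diagonal subgroup) and is also correct; just record explicitly that the subsequence extraction is harmless because every subsequence admits a further subsequence converging to the same limit $\xi^+$.
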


\begin{proof}
For each point $\xi \in \Hy$, we have 
$\lim_{n \to +\infty} f_n(\xi) = \xi^+$ since $d(f_n(\xi),f_n(z)) = d(\xi,z)$. For $\xi \neq \xi^-$ in $\partial \Hy$, we choose $\xi'  \in \partial \Hy$  different from $\xi$ and $\xi^-$and 
a geodesic $\alpha : \R \to \Hy$ joining $\xi$ to $\xi'$, that is, $\xi= \lim_{t \to -\infty} \alpha(t)$ and $\xi' = \lim_{t \to +\infty} \alpha(t)$. 
If we denote by $\alpha_n = f_n \scirc \alpha$ the geodesic joining 
$f_n(\xi)$ to $f_n(\xi')$, then $d(f_n^{-1}(z), \alpha(t)) = d(z,\alpha_n(t))$  for all $t \in \R$. Since $\lim_{n \to +\infty} f_n^{-1}(z) = \xi^-$ and $\xi^-$ is different from $\xi$ and $\xi'$, we have
$\lim_{n \to +\infty} d(z,\alpha_n(t)) = + \infty$
for all $t \in \R$. It follows that the sequence of geodesics $\alpha_n$ converges to a point $\zeta \in \partial \Hy$. This implies that 
$\lim_{n \to +\infty} f_n(\alpha(t)) = \zeta$
for all $t \in R$. Now, since $\alpha(t)$ belongs to $\Hy$, we have
$\lim_{n \to +\infty} f_n(\alpha(t)) = \xi^+$ and hence $\lim_{n \to +\infty} f_n(\xi) = \zeta = \xi^+$.
\end{proof}

\begin{proof}[Proof of Proposition~\ref{charactB}]
By duality, it is enough to prove the action of $\Gamma$ on $\partial \Hy \times G$ is minimal when $p_1(\Gamma)$ acts minimally on $\partial \Hy$ and $\overline{p_2(\Gamma)} = G$. This second condition allows us to choose a non stationary sequence $\{ g_n\} _{n \geq 0}$ in  $p_2(\Gamma)$ that converges to the identity element $1$ of $G$. Then there is a sequence $\{ f_n\} _{n \geq 0}$ in $\PSL$ such that $\gamma_n = (f_n,g_n) \in \Gamma$ for all $n \geq 0$. Since $\Gamma$ is discrete, this sequence $\{ f_n\} _{n \geq 0}$ is not bounded. Thus, without loss of generality, we can assume that the sequences 
$\{ f_n(z)\} _{n \geq 0}$ and $\{ f_n^{-1}(z)\} _{n \geq 0}$ converge to some points $\xi^+$ and $\xi^-$ in $\partial \Hy$ for some $z \in \Hy$. 
For each point $\xi \neq \xi^-$ in $\partial \Hy$, we deduce from the key lemma~\ref{keylemma} that 
$$
(\xi^+,g) = \lim_{n \to +\infty} (f_n(\xi),g_ng) =  \lim_{n \to +\infty} \gamma_n(\xi,g) \in \overline{\Gamma(\xi,g)}
$$
for all $g \in G$. More generally, assuming that  $\xi \neq f(\xi^-)$  for some  $f \in p_1(\Gamma)$ and replacing 
$\gamma_n$ with $\gamma'\gamma_n(\gamma')^{-1}$ where $\gamma' = (f,g') \in \Gamma$, we have: 
$$
(f(\xi^+),g)  = \lim_{n \to +\infty} (ff_n(f^{-1}(\xi)),g'g_n(g')^{-1}g)   = \lim_{n \to +\infty} \gamma'\gamma_n(\gamma')^{-1}(\xi,g)  \in \overline{\Gamma(\xi,g)}.
$$
Thus, if $\xi \in \partial \Hy$ does not belong to the orbit $p _1(\Gamma)\xi^-$, then $\overline{p_1(\Gamma)\xi^+}\times \{g\} \subset \overline{\Gamma(\xi,g)}$. Using the minimality of the action of $p_1(\Gamma)$ on $\partial  \Hy$, we get $\partial \Hy \times \{g\} \subset \overline{\Gamma(\xi,g)}$ for all $g \in G$. Now, since $\overline{p_2(\Gamma)} = G$, it follows that $\overline{\Gamma(\xi,g)} = \partial \Hy \times G$.
Finally, assume that $\xi = f(\xi^-)$ for some $f \in p_1(\Gamma)$. 
Since the $p_1(\Gamma)$ acts minimally on $\partial  \Hy$ and contains unbounded sequences like 
$\{ f_n\} _{n \geq 0}$, either $p_1(\Gamma)$ is dense in $\PSL$ or $p_1(\Gamma)$ is a Fuchsian group of first kind (i.e having $\partial \Hy$ as limit set). This implies that there exists $\gamma'= (f',g') \in \Gamma$ such that the sequence $(f')^k(\xi^+)$ converges to a point $\xi' \notin p_1(\Gamma)\xi^-$ when $k$ goes to $+\infty$ 
and $(f')^k(\xi^-) \neq \xi$ for all $k \geq 0$. 
So the sequence 
$(\gamma')^k =  ((f')^k,(g')^k ) \in \Gamma$ verifies:
$$
\lim_{n \to +\infty} (\gamma')^k\gamma_n(\gamma')^{-k}(\xi,g) = ((f')^k(\xi^+), g) 
$$
and therefore $(\xi',g)$ belong to $\overline{\Gamma(\xi,g)}$. Since $\xi' \notin p_1(\Gamma)\xi^-$, according to the previous step, $\Gamma(\xi',g)$ is dense in $\partial \Hy \times G$ and hence $\Gamma(\xi,g)$ is also dense.
\end{proof} 

Theorem~\ref{thm1} really concerns cocompact discrete subgroups. Before we deal with this case, let us introduce the notion of semi-parabolic element of the Lie group $\PSL\times G$. Thus, we say that $(f,g) \in \PSL\times G$ is {\em semi-parabolic} if $f$ is conjugated in $\PSL$ to an element $u \neq Id$ in $U$. The existence of semi-parabolic elements in $\Gamma$ is related to the behaviour of the right $D$-action on $X$ where 
 $$
D = \{ \ \matrice{\lambda}{0}{0}{\lambda^{-1}}  \ /  \lambda > 0 \ \} \quad \mbox{and} \quad D^+ = \{ \ \matrice{\lambda}{0}{0}{\lambda^{-1}}  \ /  \lambda > 1 \ \}
$$ 
are the diagonal group and its strictly positive cone.

\begin{lemma} \label{splemma}
If $\Gamma$ contains a semi-parabolic element, then there are divergent positive semi-orbits with respect the right $D^+$-action on $X$.
\end{lemma}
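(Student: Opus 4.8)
The plan is to exhibit one explicit divergent forward orbit, namely the geodesic ray aimed at the cusp created by the parabolic part of the semi-parabolic element. Write the semi-parabolic element as $\gamma = (f,g) \in \Gamma$, where $f$ is parabolic and therefore fixes a unique point $\xi \in \partial \Hy$. I would choose $h_0 \in \PSL$ conjugating $f$ into $U$, say $u := h_0^{-1} f h_0 = \matrice{1}{t}{0}{1}$ with $t \neq 0$; since $u$ fixes $\infty$, this forces $h_0(\infty) = \xi$. The claim is that the positive $D^+$-semi-orbit of the point $x_0 = \Gamma(h_0,1) \in X$ is divergent. Geometrically, $x_0$ translated by $\matrice{\lambda}{0}{0}{\lambda^{-1}}$ runs along the geodesic ray ending at the cusp $\xi$, and the point is to show this ray leaves every compact subset of $X$ as $\lambda \to +\infty$.

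The mechanism driving the divergence is the contraction $d_\lambda^{-1} u\, d_\lambda = \matrice{1}{\lambda^{-2}t}{0}{1} \longrightarrow Id$ as $\lambda \to +\infty$, which is exactly why the ray must be aimed at $\xi$ (for a basepoint whose forward endpoint is not $\xi$, the conjugate $d_\lambda^{-1} f\, d_\lambda$ would instead blow up). I would argue by contradiction. If the semi-orbit were not divergent, there would be $\lambda_n \to +\infty$ and $\gamma_n = (\phi_n,\psi_n) \in \Gamma$ such that $\gamma_n \cdot (h_0 d_{\lambda_n},1) = (\phi_n h_0 d_{\lambda_n}, \psi_n)$ converges in $\PSL \times G$, say to $(\tilde h, \tilde k)$. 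In particular $\psi_n \to \tilde k$ and, writing $\epsilon_n := \tilde h^{-1}\phi_n h_0 d_{\lambda_n} \to Id$, we get $\phi_n h_0 = \tilde h\, \epsilon_n\, d_{\lambda_n}^{-1}$.

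Now I would feed these elements back into the group through the conjugates $\gamma_n \gamma \gamma_n^{-1} = (\phi_n f \phi_n^{-1},\ \psi_n g \psi_n^{-1}) \in \Gamma$. The second coordinate converges to $\tilde k g \tilde k^{-1}$ and --- crucially --- need not be trivial, so the $G$-factor causes no trouble. For the first coordinate, using $f = h_0 u h_0^{-1}$ and the expression for $\phi_n h_0$, one finds $\phi_n f \phi_n^{-1} = \tilde h\, \epsilon_n (d_{\lambda_n}^{-1} u\, d_{\lambda_n})\,\epsilon_n^{-1}\tilde h^{-1} \longrightarrow Id$ by the contraction above together with $\epsilon_n \to Id$. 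Thus $\gamma_n \gamma \gamma_n^{-1}$ is a convergent sequence in the discrete group $\Gamma$, hence eventually equal to its limit, whose $\PSL$-component is $Id$; so $\phi_n f \phi_n^{-1} = Id$ for large $n$ and therefore $f = Id$, contradicting that $f$ is conjugate to $u \neq Id$. This contradiction shows that the semi-orbit of $x_0$ is divergent.

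I expect the only genuinely delicate point to be the choice of basepoint: the whole argument hinges on aligning the forward endpoint of the orbit with the parabolic fixed point $\xi$, so that conjugating $u$ by $d_\lambda$ contracts to the identity rather than diverging. Everything else --- the extraction of a convergent subsequence out of a non-divergent orbit, the harmless behaviour of the $G$-coordinate, and the final appeal to discreteness --- is routine once this alignment is in place.
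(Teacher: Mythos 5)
Your proposal is correct and follows essentially the same route as the paper: both take as basepoint the class of the conjugator $h_0$ (the paper's $f$ in $\gamma=(fuf^{-1},g)$), exploit the contraction $d_\lambda^{-1}ud_\lambda\to Id$, and derive a contradiction by observing that the conjugates $\gamma_n\gamma^{\pm1}\gamma_n^{-1}$ converge in the discrete group $\Gamma$ to an element with trivial $\PSL$-component, forcing $u=Id$. The only difference is cosmetic (you conjugate $\gamma$ directly rather than rewriting the orbit sequence as in the paper), so there is nothing to add.
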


\begin{proof} Assume that $\Gamma$ contains a semi-parabolic element 
$\gamma = (fuf^{-1},g)$ where $u \in U - \{Id\}$, $f \in \PSL$ and $g \in G$. Given $g' \in G$,  we set $x = \Gamma(f,g') \in X$ and we prove that 
$xD^+$ diverges. Suppose on the contrary that the sequence $\{xd_n\}_{n \geq 0}$ converges for some non-bounded sequence
 $\{d_n\}_{n \geq 0}$ in $D^+$. Put 
$$
d_n = \matrice{\lambda_n}{0}{0}{\lambda_n^{-1}}
$$
such that $\lambda _n \to +\infty$. Also write 
$$
u = \matrice{1}{t}{0}{1}
$$
with $t \neq 0$.  By hypothesis, there exists a sequence $\{\gamma_n\}_{n\geq 0}$ in $\Gamma$ such that 
$\gamma_n(f,g')d_n$ converge to some element $(f'',g'')$ in $H$. Notice that 
$$
\gamma_n(f,g')d_n = \gamma_n\gamma^{-1}\gamma(f,g')d_n = \gamma_n\gamma^{-1}(fud_n,gg') = 
\gamma_n\gamma^{-1}(fd_nd_n^{-1}ud_n,gg') 
$$
and 
$$
\lim_{n \to +\infty} d_n^{-1}ud_n = \lim_{n \to +\infty} \matrice{1}{t\lambda_n^{-2}}{0}{1} = Id.
$$
We deduce that the sequence $\gamma_n\gamma^{-1}(fd_n,gg')$ also converges to $(f'',g'')$. Now, since
$$
\gamma_n\gamma^{-1}(fd_n,gg') = \gamma_n\gamma^{-1}\gamma_n^{-1} \Big( \gamma_n(f,g')d_n\Big) (Id,(g')^{-1}g g')
$$
and 
$$
\lim_{n \to +\infty} \gamma_n(f,g')d_n 
= (f'',g''), 
$$
it follows that $\gamma_n\gamma^{-1}\gamma_n^{-1}$ converges to $(Id,g''(g')^{-1}g^{-1} g'(g'')^{-1})$ in $H$. Since $\Gamma$ is discrete, for $n$ large enough, we have $p_1(\gamma_n\gamma^{-1}\gamma_n^{-1}) = Id$ and therefore $u = Id$ 
contradicting the hypothesis. 
\end{proof}

Let us assume $X$ is compact. From Lemma~\ref{splemma}, we have immediately:

\begin{proposition} \label{semiparabolic}
 If  $X = \Gamma \rs \PSL \times G$ is compact, then $\Gamma$ does not contain semi-parabolic elements. \qed
\end{proposition}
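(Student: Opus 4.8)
The plan is to prove the contrapositive and read the statement off directly from Lemma~\ref{splemma}. Suppose, towards a contradiction, that $\Gamma$ contains a semi-parabolic element. Then Lemma~\ref{splemma} produces a point $x \in X$ whose positive semi-orbit $xD^+$ under the right $D^+$-action is divergent, in the sense that it eventually escapes every compact subset of $X$.

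The key observation is that a compact space admits no divergent semi-orbit. Concretely, since $D^+$ is non-compact, I would fix any unbounded sequence $\{d_n\}_{n \geq 0}$ in $D^+$ (say with $\lambda_n \to +\infty$). The translates $x d_n$ all lie in $X$, which is assumed compact, so the sequence $\{x d_n\}_{n \geq 0}$ admits a convergent subsequence. But divergence of $xD^+$ precisely forbids any unbounded sequence $\{x d_n\}$ from having a convergent subsequence; indeed, this is exactly what the proof of Lemma~\ref{splemma} rules out, since there one derives a contradiction from the assumption that $\{x d_n\}$ converges for some non-bounded $\{d_n\}$ in $D^+$. This contradiction shows that $\Gamma$ cannot contain a semi-parabolic element.

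The only point requiring care is to match the meaning of \emph{divergent} established in Lemma~\ref{splemma}, namely that no unbounded sequence $x d_n$ (with $d_n \in D^+$) can converge in $X$. I would simply invoke this together with the sequential compactness of $X$ to close the argument. There is no genuine obstacle here: all the dynamical content lives in Lemma~\ref{splemma}, and compactness does the rest, which is why the proposition follows immediately.
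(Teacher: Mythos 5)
Your argument is correct and is exactly the paper's: the authors state that the proposition follows ``immediately'' from Lemma~\ref{splemma} together with the compactness of $X$, which is precisely the contrapositive reading you give (a divergent $D^+$-semi-orbit cannot exist in a compact space, since every sequence $xd_n$ would have a convergent subsequence, which is what the lemma's notion of divergence forbids). No difference in substance; you have merely written out the one-line deduction the paper leaves implicit.
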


\noindent
Before we reformulate Proposition~\ref{charactB} in the cocompact case, let us recall the following classification lemma: 

\begin{speciallemma} \label{lemmaD}
Let $\Delta$ be a subgroup of $\PSL$ and denote by $\oclosure{\Delta}$ the connected component of the identity of its closure $\overline{\Delta}$. If $\Delta$ is neither discrete nor dense, then $\oclosure{\Delta}$ is conjugated to $PSO(2,\R)$ or a Lie subgroup of $B$. \qed 
\end{speciallemma}

\begin{proposition} \label{cocompactB}
Let $\Gamma$ be a cocompact discrete subgroup of $H = \PSL \times G$. Denote by $X$ the compact quotient by left translation. Then the right $B$-action on $X$ is minimal if and only if $\overline{p_2(\Gamma)} = G$, or equivalently if 
the right $\PSL$-action on $X$ is minimal.
\end{proposition}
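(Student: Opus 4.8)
The plan is to deduce Proposition~\ref{cocompactB} from Proposition~\ref{charactB} by showing that, in the cocompact case, condition~(i) of that proposition --- namely that $\Delta = p_1(\Gamma)$ acts minimally on $\PSL/B \cong \partial \Hy$ --- holds \emph{automatically}, so that minimality of the $B$-action reduces to condition~(ii). Since the implication from minimality of the $B$-action to $\overline{p_2(\Gamma)} = G$ is already recorded in the remark preceding Proposition~\ref{charactB}, the whole content is the claim: \emph{if $\Gamma$ is cocompact, then $\Delta$ acts minimally on $\partial \Hy$}. First I would use Proposition~\ref{semiparabolic} to record that $\Delta$ contains no parabolic element, since a parabolic $f \in \Delta$ would arise from a semi-parabolic element $(f,g) \in \Gamma$. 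Then I would run the trichotomy of the Classification Lemma~\ref{lemmaD} on $\Delta$: either $\Delta$ is dense in $\PSL$, or $\Delta$ is discrete, or $\oclosure{\Delta}$ is conjugate to $PSO(2,\R)$, $U$, $D$, or $B$.

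The two ``large'' cases give minimality directly. If $\Delta$ is dense, then for any $\xi \in \partial \Hy$ one has $\overline{\Delta \xi} \supseteq \overline{\Delta}\, \xi = \PSL\, \xi = \partial \Hy$ by transitivity of $\PSL$ on the boundary. The same computation settles the case $\oclosure{\Delta}$ conjugate to $PSO(2,\R)$, because a conjugate of $PSO(2,\R)$ already acts transitively on $\partial \Hy$ and $\oclosure{\Delta}\, \xi \subseteq \overline{\Delta \xi}$.

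The remaining cases must be excluded. If $\oclosure{\Delta}$ is conjugate to $U$, then, $\oclosure{\Delta}$ being open in $\overline{\Delta}$ and $\Delta$ dense in $\overline{\Delta}$, the subgroup $\Delta \cap \oclosure{\Delta}$ is dense, hence nontrivial, in $U \cong \R$, producing a parabolic in $\Delta$ and contradicting the previous paragraph. If $\oclosure{\Delta}$ is conjugate to $B$, then since $B$ is self-normalising and $\oclosure{\Delta}$ is normal in $\overline{\Delta}$ we get $\overline{\Delta} = B$, so $\Delta$ is dense in $B$; were $\Delta \cap U$ trivial, $\Delta$ would inject into $B/U \cong \R$ and be abelian, forcing the non-abelian group $\overline{\Delta} = B$ to be abelian, so $\Delta \cap U \neq \{Id\}$ and again yields a parabolic. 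If $\oclosure{\Delta}$ is conjugate to $D$, cocompactness enters: the fixed-point set of $\oclosure{\Delta}$ on $\partial \Hy$ is a two-point set preserved by $\overline{\Delta}$, so a subgroup $\Delta_0 \leq \Delta$ of index at most $2$ fixes both points and lies in $D$; then $\Gamma_0 = p_1^{-1}(\Delta_0) \subseteq D \times G$ is cocompact in $\PSL \times G$, and the continuous surjection $\Gamma_0 \rs \PSL \times G \to (D \times G) \rs \PSL \times G \cong D \rs \PSL$ would make the non-compact space $D \rs \PSL$ compact, a contradiction.

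Finally, the discrete case is handled by the same compactness mechanism, now yielding minimality rather than a contradiction: the map $\Gamma \rs \PSL \times G \to \Delta \rs \PSL$, $\Gamma(f,g) \mapsto \Delta f$, is a continuous surjection from a compact space, so $\Delta$ is cocompact in $\PSL$; a cocompact discrete subgroup of $\PSL$ is a non-elementary Fuchsian group of the first kind, whose limit set is all of $\partial \Hy$ and on which it acts minimally. I expect the main obstacle to be precisely the two places where cocompactness is used essentially --- excluding $\oclosure{\Delta} \cong D$ and controlling the discrete case --- since these rule out the Schottky-type degenerations that the absence of parabolics alone cannot kill; the remaining steps are bookkeeping with the Classification Lemma and elementary hyperbolic geometry.
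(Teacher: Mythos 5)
Your argument is correct and follows the paper's own route: reduce via Proposition~\ref{charactB} to the minimality of the $p_1(\Gamma)$-action on $\partial \Hy$, then split into the discrete case (where compactness of $X$ forces $p_1(\Gamma)$ to be a cocompact Fuchsian group with full limit set) and the non-discrete case, where the Classification Lemma~\ref{lemmaD}, the absence of semi-parabolic elements (Proposition~\ref{semiparabolic}) and the compactness of $X$ dispose of the degenerate closures. The only cosmetic differences are that you treat the conjugates of $U$, $D$ and $B$ separately where the paper uses the single commutator observation $[\Delta,\Delta]\subset fUf^{-1}$ to reduce to the diagonal case, and that you note the $PSO(2,\R)$ alternative would yield minimality directly instead of excluding it by compactness.
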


\begin{proof} We have only to prove the \lq if\rq~part. Now, according to Proposition~\ref{charactB}, it is enough to show that 
$\Delta = p_1(\Gamma)$ of $\PSL$ acts minimally on  $\partial \Hy$. We distinguish two cases, depending on whether this group is discrete or not. 
\medskip 

If $\Delta = p_1(\Gamma)$ is discrete, the surface $\Delta\rs \Hy$ is compact because  $X$ is compact too.  
It follows that any orbit of $p_1(\Gamma)$ in $\Hy$ accumulates to its full boundary $\partial \Hy$. 
In other words, the limit set of $\Delta$ is equal to $\partial \Hy$.
Since the limit set of any non-elementary Fuchsian group is minimal, we deduce that the action of $\Delta$ on $\partial \Hy$ is minimal. 
\medskip 

If $\Delta = p_1(\Gamma)$ is non-discrete but dense, then the action of $\Delta$ on $\partial \Hy$ is still minimal. Otherwise, according to 
Classification Lemma~\ref{lemmaD} and using the fact that $\Delta$ normalises $\oclosure{\Delta}$, we deduce that $\Delta$ is conjugated to a subgroup 
of $PSO(2,\R)$ or $B$. Since $X$ is compact, the first case is excluded. Assuming $\Delta \subset fBf^{-1}$ for some $f \in \PSL$, we have
$[\Delta,\Delta ] \subset fUf^{-1}$ and therefore $\Delta$ is abelian as a consequence of Proposition~\ref{semiparabolic}. It follows that 
$\Delta$ is conjugated to a subgroup of $D$, which contradicts the compactness of $X$. 
\end{proof}

\begin{proof}[Proof of Theorem~\ref{thm1}] According to Proposition~\ref{cocompactB}, it is enough to prove that if the right $B$-action is minimal, then the right $U$-action is minimal too. By compactness of $X$, the $U$-action has a non-empty minimal set $\M$. Let us prove $\M$ is $B$-invariant so that $\M = X$ and the right $U$-action minimal. 
\medskip 

Let $h = (f,g)$ be an element of $H = \PSL \times G$ such that $x = \Gamma h \in \M$. 
Since $\overline{xU} = \M$, there are elements $\gamma_n = (\gamma_{1n},\gamma_{2n}) \in \Gamma$ and 
$$u_n  = \matrice{1}{t_n}{0}{1} \in U
$$ 
with $t_n \to +\infty$ such that 
$$
 \lim_{n \to +\infty} \gamma_n (f,g)u_n =  \lim_{n \to +\infty} (\gamma_{1n}fu_n,\gamma_{2n}g) = (f,g) = h
$$
If we write
$f_n =  f^{-1}\gamma_{1n}fu_n$, $g_n =  g^{-1}\gamma_{2n}g$, and $h_n = (f_n,g_n)$, 
the sequence $$hh_n = (ff_n,gg_n) = \gamma_nhu_n$$ converges to $h$ so that
$$\lim_{n \to +\infty}  h_n = \lim_{n \to +\infty}  (f_n,g_n) = (Id,e).$$
Notice that the sequence $\{\gamma_{1n}\}_{n \geq 0}$ does not admit any convergent subsequence because $t_n \to +\infty$. 
On the other hand, 
since $hh_n = \gamma_nhu_n$ represents  the class $xu_n$ in the orbit $xU$, the element $h_n = (f_n,g_n) \in H$ belongs to the set 
\begin{eqnarray*}
H_\M  = \{ h' \in H / \M h' \cap \M \neq \emptyset \}
\end{eqnarray*}
having the following properties:

\begin{lemma} \label{invset}
The set $H_\M$ is a closed subset of $H = \PSL \times G$ which 
is invariant under the right and left $U$-actions on $H$.
\end{lemma}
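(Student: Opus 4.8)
The plan is to derive all three assertions from two elementary facts about the $U$-minimal set $\M$: first, that $\M$ is a compact, $U$-invariant subset of $X$, and second, that the right action of $H$ on $X$ is continuous. Recall that a minimal set for the right $U$-action is closed by definition, hence compact since $X$ is compact, and is invariant under the right action of every element of $U$; because $U$ is a group, it is invariant under $u$ and under $u^{-1}$ alike. These will be the only inputs I use.

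For closedness, I would take a sequence $\{h'_n\}_{n \geq 0}$ in $H_\M$ converging to some $h' \in H$. By definition of $H_\M$, for each $n$ there is a point $z_n \in \M$ with $z_n h'_n \in \M$. Using the compactness of $\M$, I pass to a subsequence along which $z_n \to z \in \M$; by continuity of the right action, $z_n h'_n \to z h'$, and since $\M$ is closed this limit again lies in $\M$. Thus $z \in \M$ and $z h' \in \M$, which shows $\M h' \cap \M \neq \emptyset$ and hence $h' \in H_\M$.

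For the invariance statements I would argue purely formally. Given $h' \in H_\M$ and $u \in U$, choose $z \in \M$ with $z h' \in \M$. For right invariance, note that $z(h'u) = (z h')u \in \M$ because $\M$ is $U$-invariant, while $z \in \M$, so $h'u \in H_\M$. For left invariance, I keep the same $z$ but replace it by $z u^{-1}$, which still lies in $\M$ by $U$-invariance; then $(z u^{-1})(u h') = z h' \in \M$, so $\M(uh') \cap \M \neq \emptyset$ and $uh' \in H_\M$.

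The only place demanding any care is the closedness argument, where the compactness of $\M$ (inherited from that of $X$) is essential in order to extract a convergent subsequence of the base points $z_n$; without it one could not guarantee that the limit relation $z h' \in \M$ survives. The two invariance properties, by contrast, are immediate consequences of the group structure of $U$ together with the $U$-invariance of $\M$, and involve no limiting process at all.
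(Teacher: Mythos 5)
Your proof is correct and follows essentially the same route as the paper: compactness of $\M$ plus continuity of the right $H$-action gives closedness of $H_\M$, and the $U$-invariance of $\M$ gives the two invariance statements (the paper merely phrases the latter as set-level identities such as $\M uh \cap \M = \M u^{-1}uh \cap \M$, which is your element-wise argument in disguise). No substantive difference.
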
 

\begin{proof} Let  $h'_n \in H_\M$ be a sequence that converges to some element $h' \in H$. By definition, for any $n \in \N$, there is $x_n = 
\Gamma h_n \in \M$ such that $x_n h'_n = \Gamma h_nh'_n  \in  \M$. By compactness of $\M$ and replacing the sequence with some subsequence if necessary, we may assume that the sequence $x_n$  converges to a class $x=\Gamma h$ in $\M$. Then $x h' = 
\lim_{n \to +\infty} x_nh'_n \in \M$ and hence $h' \in H_\M$. 
\medskip 

Let us prove $H_\M$ is invariant under the right and left $U$-actions on $\PSL \times G$.  Indeed, since $\M u^{-1} = \M$, we have:
$$
\M uh \cap \M =  \M u^{-1}uh \cap \M = \M h \cap \M \neq \emptyset
$$
for all $u \in U$ and for all $h \in H_\M$. Likewise, we have:
$$
\M hu \cap \M = (\M h \cap \M u^{-1})u = (\M h \cap \M )u \neq \emptyset
$$
proving the right invariance.
\end{proof}

Returning to the proof of Theorem~\ref{thm1}, we have: 

\begin{lemma} \label{hn}
There exists $k \in \N$ such that $f_n \notin B$ for $n \geq k$.
\end{lemma}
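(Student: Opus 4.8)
The plan is to argue by contradiction: I would assume that $f_n \in B$ for infinitely many $n$ and pass to a subsequence along which $f_n \in B$ for every $n$. The starting observation is that, since $f_n = f^{-1}\gamma_{1n}fu_n$ and $f_n \in B$, the element $\delta_n := f^{-1}\gamma_{1n}f = f_n u_n^{-1}$ is again upper triangular, hence lies in $B$ and fixes the boundary point $\infty \in \partial\Hy$. Writing $f_n = \matrice{\lambda_n}{s_n}{0}{\lambda_n^{-1}}$, where $\lambda_n \to 1$ and $s_n \to 0$ because $f_n \to Id$, a one-line computation gives $\delta_n = \matrice{\lambda_n}{c_n}{0}{\lambda_n^{-1}}$ with $c_n = s_n - \lambda_n t_n$. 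Using $t_n \to +\infty$ this forces $c_n \to -\infty$, and in particular $c_n \neq 0$ for all large $n$; I will keep track of this divergence since it is what makes the fixed points below escape.

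Next I would dispose of the parabolic possibility. If some $\delta_n$ were parabolic, i.e.\ $\lambda_n = 1$, then $\gamma_{1n} = f\delta_n f^{-1}$ would be conjugate in $\PSL$ to the nontrivial element $\delta_n \in U$, so $\gamma_n = (\gamma_{1n},\gamma_{2n})$ would be a semi-parabolic element of $\Gamma$, contradicting Proposition~\ref{semiparabolic}. Hence $\lambda_n \neq 1$ for every $n$, and each $\delta_n$ is a hyperbolic element fixing $\infty$ together with the finite point $x_n = -c_n/(\lambda_n - \lambda_n^{-1})$. Because $c_n \to -\infty$ while $\lambda_n - \lambda_n^{-1} \to 0$, I have $|x_n| \to +\infty$.

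The heart of the matter—and the step I expect to be the main obstacle—is that a single hyperbolic $\gamma_{1n}$ does not by itself yield a semi-parabolic element, so I must manufacture one from two of them. Identifying $B$ with the affine group of $\R = \partial\Hy \setminus \{\infty\}$, each $\delta_n$ acts as the affine map $x \mapsto \lambda_n^2 x + \lambda_n c_n$, whose unique fixed point is $x_n$. Since $|x_n| \to \infty$ the fixed points cannot all coincide, so I may select indices $n \neq m$ with $x_n \neq x_m$. Two orientation-preserving affine maps with distinct fixed points do not commute, and any commutator in the affine group has trivial linear part, so $[\delta_n,\delta_m]$ is a nontrivial translation, that is, a nontrivial element of $U$. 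Consequently $[\gamma_{1n},\gamma_{1m}] = f[\delta_n,\delta_m]f^{-1}$ is conjugate to a nontrivial element of $U$, whence $[\gamma_n,\gamma_m] \in \Gamma$ is semi-parabolic, contradicting Proposition~\ref{semiparabolic} and completing the argument. The only points requiring care are the bookkeeping giving $c_n \to -\infty$ (so that $x_n$ genuinely escapes and cannot be constant) and the elementary fact that commuting affine maps with linear part $\neq 1$ must share their fixed point.
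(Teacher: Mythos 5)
Your proof is correct, and it rests on the same key ingredient as the paper's: since the $f_n$ are assumed to lie in $B$, the elements $\delta_n=f^{-1}\gamma_{1n}f=f_nu_n^{-1}$ lie in $B$, so commutators of the $\gamma_{1n}$ land in $fUf^{-1}\cap p_1(\Gamma)$, where Proposition~\ref{semiparabolic} (no semi-parabolic elements in $\Gamma$) applies. The two arguments diverge in the endgame. The paper concludes from Proposition~\ref{semiparabolic} that \emph{all} these commutators are trivial, hence that the $\gamma_{1n_k}$ admit a common conjugator $u\in U$ putting them in diagonal form, and then derives a contradiction by playing the unboundedness of the eigenvalues $\lambda_{n_k}$ (coming from the non-convergence of $\gamma_{1n_k}$) against $f_{n_k}e_1\to e_1$. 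You instead go the other way: using $f_n\to Id$ and $t_n\to+\infty$ you show the off-diagonal entries $c_n$ of $\delta_n$ diverge, rule out the parabolic case directly, track the finite fixed points $x_n=-c_n/(\lambda_n-\lambda_n^{-1})$ and show they escape to infinity, so two of the $\delta_n$ must have distinct fixed points and hence a \emph{nontrivial} commutator in $U$ --- which contradicts Proposition~\ref{semiparabolic} immediately. Your route short-circuits the paper's diagonalization and eigenvalue bookkeeping at the cost of the (correctly handled) fixed-point computation and the elementary fact that commuting affine maps with nontrivial linear part share their fixed point; both versions are complete and use exactly the same external inputs ($f_n\to Id$, $t_n\to+\infty$, Proposition~\ref{semiparabolic}).
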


\begin{proof} Let us assume on the contrary that for every $k \in \N$, there exists $n_k \geq k$ such that $f_{n_k} \in B$.  Then $f^{-1}\gamma_{1n_k}f = f_{n_k}u_{n_k}^{-1} \in B$ and hence $\gamma_{1n_k} \in fBf^{-1}$. It follows that $[\gamma_{1n_k},\gamma_{1n_{k'}}] \in \Gamma \cap fUf^{-1}$ for all $k,k' \geq 0$. But according to Proposition~\ref{semiparabolic}, $\Gamma$ does not contain semi-parabolic elements and therefore $[\gamma_{1n_k},\gamma_{1n_{k'}}] = Id$. Then there exists $u \in U$ such that 
$$
f^{-1}\gamma_{1n_k}f = u\matrice{\lambda_{n_k}}{0}{0}{\lambda_{n_k}^{-1}}u^{-1}
$$ 
for all $k \geq 0$. Since the sequence $\{\gamma_{1n_k}\}_{k \geq 0}$ does not converge, the sequence $\{\lambda_{n_k}\}_{k \geq 0}$ is not bounded, which is impossible because the matrices 
$f_{n_k} = f^{-1}\gamma_{1n_k}f u_{n_k}$ converge to $Id$ and hence the vectors
$f_{n_k} e_1 = (\lambda_{n_k}, 0)$ converge to $e_1 = (1,0)$. 
\end{proof}

To conclude, let us put 
$$
f_n = \matrice{a_n}{b_n}{c_n}{d_n}
$$
where $c_n \neq 0$ according to Lemma~\ref{hn}. For every $\alpha \in \R^\ast_+$, take 
$$
u'_n = \matrice{1}{\frac{\alpha - a_n} {c_n}}{0}{1} \quad \mbox{ and }   \quad 
u''_n = \matrice{1}{-\frac{1}{\alpha}(b_n+d_n \frac{\alpha - a_n}{c_n})}{0}{1}
$$
in $U$. From Lemma~\ref{invset}, as $h_n = (f_n,g_n) \in H_\M$, we have: 
$$
u'_nh_nu''_n = (u'_nf_nu''_n,g_n) = (\matrice{\alpha}{0}{c_n}{\alpha^{-1}},g_n)  \in H_\M
$$
Since $\lim_{n \to +\infty} c_n = 0$ and $\lim_{n \to +\infty} g_n = e$ , we deduce that
$$
(\matrice{\alpha}{0}{0}{\alpha^{-1}},e) \in H_\M.
$$
This means that 
$$
\M_\alpha = \M \matrice{\alpha}{0}{0}{\alpha^{-1}} \cap \M \neq \emptyset
$$
for all $\alpha \in \R^\ast$. Since 
$$
\matrice{\alpha}{0}{0}{\alpha^{-1}} \matrice{1}{t}{0}{1} = \matrice{1}{\alpha^2 t}{0}{1} \matrice{\alpha}{0}{0}{\alpha^{-1}},
$$
the set $\M _\alpha$ is a $U$-invariant closed subset of $\M$. By minimality, we have $\M_\alpha = \M$ and therefore $\M$ is $D$-invariant, i.e. 
$$
\M \matrice{\alpha}{0}{0}{\alpha^{-1}} = \M 
$$
for all $\alpha \in \R^\ast_+$. So $\M$ is also $B$-invariant and hence $\M = X$ from Proposition~\ref{cocompactB}.
\end{proof}

In the particular case where $G$ is trivial, we have just given a simple proof of Hedlund's Theorem, which is essentially the one that Ghys gave in \cite{G2}.
We illustrate the general situation with two examples: 

\begin{examples}  \label{examplesLie}
(i) 
According to Theorem C of \cite{B}, if $G = \PSL$, then $H = \PSL \times \PSL$ admits discrete uniform subgroups $\Gamma$. If $\Gamma$ is irreducible, then $p_1(\Gamma)$ and $p_2(\Gamma)$ are dense in $\PSL$. In particular, the natural right 
$\PSL$-action on $X = \Gamma \rs H$ is minimal. From Theorem~\ref{thm1}, the natural right $U$-action is minimal too. 
\medskip 

\noindent
(ii) In \cite{BGSS}, the authors proved that any torsion-free cocompact Fuchsian group $\Gamma$ can be realised as a dense subgroup of  $G = SO(3,\R)$. Let $h$ be an injective representation of $\Gamma$ into $SO(3,\R)$ and consider the free and properly discontinuous action of $\Gamma$ on $H = \PSL \times SO(3,\R)$ given by $\gamma.(f,g) = (\gamma f, h(\gamma)g)$ for all $\gamma \in \Gamma$ and for all $(f,g) \in H$. This allows us to see $\Gamma$ as a cocompact discrete subgroup of $H$. Since $h(\Gamma)$ is dense in $SO(3,\R)$, by applying Theorem~\ref{thm1}, we conclude that the natural right $U$-action on $X =  \Gamma \rs H$ is minimal. 
\end{examples}

\section{Proof of Theorem~\ref{thm2}} \label{Section2}

Let $G$ be a connected Lie group and let $\mathfrak{g}$ be its Lie algebra. Right $\PSL$-actions on homogeneous manifolds $X = \Gamma \rs \PSL \times G$ are examples of smooth $G$-Lie foliations.  This type of foliations has been classically defined using smooth foliated cocycles with values in $G$ or smooth differential $1$-forms with values in $\mathfrak{g}$, see \cite{G}, \cite{Mo1} and \cite{Mo2}. However, in our context, it is more convenient to use the following criterion as definition: 

\begin{theorem}[\cite{F}] \label{Fedida}
A smooth foliation $\F$ on a compact connected manifold $M$ is a $G$-Lie foliation if and only if  there are 
\begin{list}{\labelitemi}{\leftmargin=5pt}

\item[(i)] a discrete group $\Gamma$ acting freely and properly discontinuously on a manifold $\widetilde{M}$,

\item[(ii)] a group homomorphism $h : \Gamma \to G$,

\item[(iii)] a $\Gamma$-equivariant locally trivial smooth fibration $\rho : \widetilde{M} \to G$ with connected fibres,  
 \end{list} 
such that $M = \Gamma \rs \widetilde{M}$ and $\F$ is induced by the foliation $\widetilde{\F}$ of $\widetilde{M}$ whose leaves are the fibres of $\rho$.  The group $\Gamma$ is called the {\em holonomy group}  of $\F$. 
\end{theorem}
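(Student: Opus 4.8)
The plan is to prove the two implications separately, working with the differential-form description of a $G$-Lie foliation mentioned above: such a foliation is cut out by a $\mathfrak{g}$-valued $1$-form $\omega$ on $M$ that is fibrewise surjective onto $\mathfrak{g}$ with $\ker\omega = T\F$ and satisfies the Maurer--Cartan equation $d\omega + \tfrac{1}{2}[\omega,\omega] = 0$.

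The converse (``if'') implication is the easy one, so I would dispatch it first. Given the data $(\widetilde M,\Gamma,h,\rho)$, let $\theta_G$ denote the left-invariant Maurer--Cartan form of $G$ and put $\widetilde\omega = \rho^*\theta_G$. Since $\theta_G$ is a pointwise isomorphism $T_g G \to \mathfrak{g}$ and $\rho$ is a submersion with fibres $\widetilde{\F}$, the form $\widetilde\omega$ is $\mathfrak{g}$-valued with $\ker\widetilde\omega = T\widetilde{\F}$, and it satisfies the Maurer--Cartan equation because $\theta_G$ does. The $h$-equivariance $\rho(\gamma x) = h(\gamma)\rho(x)$ together with the left-invariance of $\theta_G$ gives $\gamma^*\widetilde\omega = \widetilde\omega$ for all $\gamma\in\Gamma$, so $\widetilde\omega$ descends to a Maurer--Cartan form $\omega$ on $M = \Gamma\rs\widetilde M$ whose kernel defines $\F$; hence $\F$ is a $G$-Lie foliation.

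For the direct (``only if'') implication I would realise the transverse structure by a developing map. Pass to the universal cover $\pi:\widetilde M \to M$, with $\Gamma = \pi_1(M)$ acting by deck transformations --- an action that is automatically free and properly discontinuous with $M = \Gamma\rs\widetilde M$ --- and pull back $\omega$ to a Maurer--Cartan form $\widetilde\omega = \pi^*\omega$ on the simply connected manifold $\widetilde M$. By the integrability theorem for Maurer--Cartan forms (the Lie-group form of Darboux's theorem) there is a smooth map $\rho:\widetilde M \to G$ with $\rho^*\theta_G = \widetilde\omega$, unique up to left translation in $G$; here it is convenient to take $G$ to be the simply connected group with Lie algebra $\mathfrak{g}$, since the structural group of a Lie foliation is only determined up to covering. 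Because $\widetilde\omega$ is $\Gamma$-invariant, each $\rho\circ\gamma$ also pulls $\theta_G$ back to $\widetilde\omega$, so uniqueness yields $\rho\circ\gamma = L_{h(\gamma)}\circ\rho$ for a unique $h(\gamma)\in G$; the cocycle relation makes $h:\Gamma\to G$ a homomorphism and $\rho$ $h$-equivariant. Finally $\ker d\rho = \ker\widetilde\omega = T\widetilde{\F}$, so $\rho$ is a submersion whose fibres are tangent to $\widetilde{\F}$.

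The main obstacle is to promote this submersion to a \emph{locally trivial fibration with connected fibres}, and this is exactly where the compactness of $M$ is used. The foliated vector fields $Y_1,\dots,Y_q$ on $M$ with $\omega(Y_i)$ equal to a basis of $\mathfrak{g}$ (the transverse parallelism of $\F$) are complete by compactness of $M$; their lifts to $\widetilde M$ are $\rho$-related to the left-invariant fields on $G$ and thus furnish $\rho$ with an Ehresmann connection whose horizontal lifts are complete. By Ehresmann's fibration theorem $\rho$ becomes a locally trivial fibration onto a subset of $G$ that is open and closed, hence onto all of the connected group $G$. The connectedness of the fibres is the delicate point: a fibre is tangent to $\widetilde{\F}$ and so is a disjoint union of leaves, and I would rule out extra components using that both $\widetilde M$ and the simply connected model $G$ are simply connected, so the homotopy exact sequence of the fibration $\rho$ forces $\pi_0$ of the fibre to vanish. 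The connected fibres are then precisely the leaves of $\widetilde{\F}$, and $(\widetilde M,\Gamma,h,\rho)$ is the required data.
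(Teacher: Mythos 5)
The paper does not prove this statement: it is quoted from F\'edida \cite{F} and explicitly adopted as the \emph{working definition} of a $G$-Lie foliation (``it is more convenient to use the following criterion as definition''), the equivalence with the cocycle and Maurer--Cartan descriptions being delegated to \cite{G}, \cite{Mo1} and \cite{Mo2}. So there is no in-paper argument to compare yours with, and it must be judged on its own. What you wrote is the standard proof of F\'edida's structure theorem, as in Molino's book, and it is essentially sound: the ``if'' direction by descending $\rho^*\theta_G$ using the $h$-equivariance, and the ``only if'' direction by integrating the lifted Maurer--Cartan form on the universal cover (Darboux--Lie), reading the holonomy homomorphism off the uniqueness of the primitive, and using compactness of $M$ to make the transverse parallelism complete so that an Ehresmann-type argument upgrades the submersion to a locally trivial fibration. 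You correctly isolate the only point where compactness enters.

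The one step to tighten is the reduction to the simply connected group. Your connectedness-of-fibres argument via the homotopy exact sequence genuinely requires $\pi_1(G)=0$: for the $G$ given in the statement, the developing map defined on the universal cover of $M$ has fibres whose $\pi_0$ is identified with $\pi_1(G)$, hence disconnected fibres whenever $G$ is not simply connected. Your construction therefore proves the theorem with $\widetilde{G}$ in place of $G$, and the aside that the structural group is ``only determined up to covering'' is doing real work: it is true at the level of the Maurer--Cartan definition (which only sees $\mathfrak{g}$), but producing a fibration onto the given $G$ with connected fibres requires either that observation to be spelled out or a different choice of $\widetilde{M}$ (the statement only asks for \emph{some} manifold with a free, properly discontinuous $\Gamma$-action, not the universal cover). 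The authors face the same issue and simply pass to the universal coverings of both $M$ and $G$ at the start of the proof of Proposition~\ref{principal}, so your convention is consistent with how the theorem is actually used in the paper; but as a proof of the statement as literally written, this reduction should be made explicit rather than left as a parenthetical remark.
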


\noindent
Assume that the leaves of $\F$ are $2$-dimensional. Given a complete Riemannian metric $g_0$ on $M$, $\F$ is said to be a foliation {\em by hyperbolic surfaces} if the restriction of $g_0$ to each leaf has hyperbolic conformal type. Actually, according to the Uniformisation Theorem of \cite{C} and \cite{V} which remains valid for any foliation by hyperbolic surfaces, there exists a (leafwise smooth) continuous function $u : M \to \R$ such that the restriction of the conformal Riemann metric $g = ug_0$ to each leaf has constant negative curvature equal to $-1$. Then each leaf $L$ is the quotient of
the Poincar\'e half-plane $\Hy$ by the action of a discrete torsion-free subgroup $\Gamma_L$ of $\PSL$. Since $\PSL$ acts freely and transitively on  $T^1 \Hy$, the unit tangent bundle $T^1 L$ is diffeomorphic to $\Gamma_L \rs PSL$. The natural smooth right $PSL$-action 
on $T^1 L \cong \Gamma_L \rs PSL$ extends to continuous global  $PSL$-action on $T^1\F$. The {\em  foliated horocycle and geodesic  flows} on $T^1\F$ are defined by the corresponding $U$-action and $D$-action, which coincide with the usual geodesic and horocycle flow on $T^1 L$ in restriction to each leaf $L$. 
In the case of the $G$-Lie foliations, we have also the following additional property: 

\begin{proposition} \label{principal}
Let $\F$ be a $G$-Lie foliation by hyperbolic surfaces of a compact connected manifold $M$. Then the developing map $\rho$ is trivial, so
$\widetilde{M}$ is homeomorphic to a product $L \times G$. Moreover, the homeomorphism becomes a diffeomorphism if and only if $\F$ admits a smooth uniformisation. 
\end{proposition}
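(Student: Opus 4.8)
The plan is to read off the trivialisation of $\rho$ from the leafwise uniformisation, keeping track of transverse regularity so that the same construction delivers the dichotomy in the second assertion. Throughout I use that, by Theorem~\ref{Fedida}, $\rho\colon\widetilde M\to G$ is a locally trivial smooth fibration whose fibres are the leaves of $\widetilde\F$, all diffeomorphic to a fixed typical leaf $L$; and that, after the Uniformisation Theorem of \cite{C} and \cite{V}, each such fibre carries a complete hyperbolic metric, so that its universal cover is the Poincar\'e half-plane $\Hy$, which is contractible.

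\textbf{Triviality.} First I would argue that $\rho$ is topologically trivial. A locally trivial fibration over the connected Lie group $G$ whose fibre is the aspherical surface $L$ (with contractible universal cover $\Hy$) is classified, up to isomorphism, by its monodromy representation of $\pi_1(G)$ into the mapping class group of $L$, and it is a product exactly when this monodromy is trivial. The monodromy vanishes here because $\widetilde M$ is the holonomy cover of a Lie foliation: the transverse structure is flat --- modelled on $G$ by left translations --- so the transverse holonomy has already been unwound, leaving no twisting along loops of $G$. Equivalently, lifting the $\dim G$ foliate vector fields dual to the transverse Maurer--Cartan form to $\widetilde M$, their flows are complete (because $M$ is compact) and integrate an action transverse to the fibres that projects through $\rho$ to the left translations of $G$; this is the concrete form, in our setting, of F\'edida's splitting \cite{F}. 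Either way one obtains a homeomorphism $\widetilde M\cong L\times G$ intertwining $\rho$ with the second projection.

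\textbf{The explicit trivialisation and its regularity.} To pin down the regularity, I would realise this homeomorphism through the uniformisation itself rather than through the smooth transverse parallelism. The uniformising metric $g=u\,g_0$ makes every fibre $\rho^{-1}(g)$ a complete hyperbolic surface; developing this structure fibrewise --- using the local triviality of $\rho$ to propagate a single marking of the base fibre $L=\rho^{-1}(1)$ --- identifies $\widetilde M$ with $L\times G$ over $G$. The only non-smooth ingredient is the conformal factor $u$, which by \cite{C} and \cite{V} is smooth along the leaves but in general only continuous transversally; hence the identification is a priori merely a homeomorphism.

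\textbf{The diffeomorphism criterion and the main obstacle.} If $\F$ admits a smooth uniformisation, then $u$ is globally smooth and every step above is smooth, so the homeomorphism is a diffeomorphism. Conversely, from a smooth product $\widetilde M\cong L\times G$ one pulls back the constant-curvature metric of the leaf factor to a smooth leafwise hyperbolic metric on $M$, that is, a smooth uniformisation. The delicate point --- and the step I expect to be the main obstacle --- is precisely this interplay of regularities: one must check that propagating the marking through the only-transversally-continuous uniformisation glues to a genuinely global identification on all of $\widetilde M$ (not merely leaf by leaf), and that passing to the holonomy cover really does trivialise the monodromy. It is here that the flat $G$-structure of the Lie foliation and the contractibility of $\Hy$ are indispensable.
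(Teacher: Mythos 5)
Your overall plan (trivialise $\rho$, then track the regularity of the trivialisation) is aligned with the paper's, but the step where you actually prove triviality contains a genuine gap. You omit the reduction the paper makes at the very start: replacing $\widetilde M$ and $G$ by the universal covers of $M$ and $G$, and then using $\pi_2(G)=0$ and the homotopy sequence of $\rho$ to conclude that the fibre is simply connected, hence equal to $\Hy$. Without this, the fibre is some possibly open, possibly non-simply-connected leaf and $G$ is not simply connected. More seriously, your classification principle is false as stated: locally trivial bundles with fibre $L$ over $G$ are classified by homotopy classes of maps $G\to B\mathrm{Diff}_+(L)$, and this space is not aspherical even in the best case $L=\Hy$, since $\mathrm{Diff}_+(\R^2)\simeq SO(2)$ and therefore $B\mathrm{Diff}_+(\Hy)\simeq \C P^\infty$. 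Thus vanishing of the monodromy on $\pi_1(G)$ does \emph{not} imply triviality of the bundle; the residual obstruction is precisely an Euler class in $H^2(G,\Z)$ (over $S^2$, for instance, there are nontrivial oriented plane bundles with no monodromy at all). Computing and killing this obstruction is the heart of the paper's proof: it passes to $\widetilde X=T^1\widetilde\F$, which is a principal $\PSL$-bundle over $G$, reduces the structure group to $PSO(2,\R)$, and shows $H^2(G,\Z)=0$ because $\pi_1(G)$ and $\pi_2(G)$ are trivial after the universal-cover reduction. You never address this obstruction, and your justification that the monodromy vanishes ``because the transverse holonomy has been unwound'' conflates the holonomy of the foliation (germs of transverse transition maps, which is what the holonomy cover kills) with the monodromy and classifying map of the fibration $\rho$, which is a different invariant.

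Two further points. The ``concrete form of F\'edida's splitting'' you invoke does not deliver what you need: the lifts to $\widetilde M$ of the vector fields dual to the transverse Maurer--Cartan form are indeed complete and project to invariant fields on $G$, but their brackets satisfy the Lie algebra relations only modulo vectors tangent to the leaves, so they do not integrate to a transverse $G$-action nor define a flat Ehresmann connection; F\'edida's theorem yields the locally trivial fibration, not its triviality. And in the converse direction of the last assertion, pulling back the constant-curvature metric of the $L$-factor through a smooth product structure on $\widetilde M$ produces a smooth leafwise hyperbolic metric on $\widetilde M$ that need not be $\Gamma$-invariant, so it does not immediately descend to a smooth uniformisation of $\F$ on $M$; the paper sidesteps this by phrasing everything in terms of the $\PSL$-equivariant structure on $T^1\widetilde\F$.
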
 

\begin{proof} Firstly, by replacing $\widetilde{M}$ and $G$ with the universal coverings of $M$ and $G$, we can assume that $\widetilde{M}$ and $G$ are simply connected. Furthermore, since the second homotopy group of the Lie group $G$ is trivial \cite{Ca}, we can use the homotopy sequence of $\rho$ to deduce that the fibre $L$ is also simply connected and hence $L = \Hy$. Then the natural right $\PSL$-action on $X = T^1\F$ lifts to a free and proper $\PSL$-action on $\widetilde{X} =T^1 \widetilde{\F}$ whose orbits are diffeomorphic to the unit tangent bundles of the fibres of $\rho$. It follows that $\widetilde{X}$ is a continuous principal $\PSL$-bundle over $G$, which becomes smooth if and only if $\F$ has a smooth uniformisation. 
By construction, the bundle map $\widetilde{\rho}  :  \widetilde{X} \to G$ is the developing map of the $G$-Lie foliation on $X$ whose leaves are the 
$\PSL$-orbits.  
\medskip

On the other hand, since the structure group $\PSL$ retracts by deformation on the stabiliser $PSO(2,\R)$ of $z=i$ in $\Hy$, the $\PSL$-bundle $\widetilde{X}$ admits a reduction to $PSO(2,\R)$. This means that there exists a continuous principal $PSO(2,\R)$-bundle $P$ over $G$ such that 
$\widetilde{X}$ is isomorphic to the continuous principal $\PSL$-bundle associated to $P$, that is, $\widetilde{X}$ is homeomorphic to the quotient of $P \times \PSL$ by the diagonal $PSO(2,\R)$-action that is given by 
$(p,f)r = (pr,r^{-1}f)$ for all $(p,f) \in P \times  \PSL$ and all $r \in PSO(2,\R)$. 

\medskip
Finally, let us recall that principal $PSO(2,\R)$-bundles over $G$ are classified by the Euler class in the integer cohomology group $H^2(G,\Z)$, see for example \cite{BT}.
Actually, according to the universal coefficient theorem (see also \cite{BT}), this group
$H^2(G,\Z) = Hom(H_2(G,\Z),\Z) \oplus Ext(H_1(G,\Z),\Z)$
is trivial because the homotopy groups $\pi_1(G)$ and $\pi_2(G)$ are trivial. 
Briefly, the principal $PSO(2,\R)$-bundle $P$ is trivial, so there is a homeomorphism 
$\varphi : P \to PSO(2,\R) \times G$ which is equivariant for the natural right $PSO(2,\R)$-actions.
By sending each $PSO(2,\R)$-orbit represented by $(p,f) \in P \times \PSL$ with $\phi(p) = (r,g)$ to the point $\Phi((p,f)PSO(2,\R))  = (rf,g)$
in $\PSL \times G$, we obtain a well-defined $\PSL$-equivariant homeomorphism
$\Phi : \widetilde{X} \to \PSL \times G$ 
such that $\widetilde{\rho} = p_2 \scirc \Phi$. Now, by passing to the quotient by the corresponding $PSO(2,\R)$-action, $\Phi$ induces a homeomorphism 
$\overline{\Phi} :  \widetilde{M} \to \Hy \times G$ such that $\rho = p_2 \scirc \overline{\Phi}$. From the previous discussion, it is also clear that 
$\overline{\Phi}$ is a diffeomorphism if and only if $\F$ admits a smooth 
uniformisation. 
\end{proof} 

Now, we restrict our attention to the notion of homogeneous $G$-Lie foliation as defined in the introduction and illustrated by Examples~\ref{examplesLie}. Recall that a $G$-Lie foliation $\F$ on a compact manifold $M$ is said to be {\em homogeneous} if there are a connected Lie group $H$ equipped with a surjective morphism $\rho : H \to G$, a compact subgroup $K_0$ of the kernel $K$ of $\rho$ and a cocompact discrete subgroup $\Gamma$ of $H$ such that $M$ is diffeomorphic to $\Gamma \rs H / K_0$ and 
$\F$ is conjugated to the foliation induced by the right $K$-action on $H$. In the case where $\F$ is a two-dimensional foliation by hyperbolic surfaces, we can assume $K = \PSL$ and $K_0 = PSO(2,\R)$. 

\begin{proposition} \label{homogeneous}Let  $\F$ be $G$-Lie foliation by hyperbolic surfaces of a compact connected manifold $M$. Then the following conditions are equivalent: 
\begin{list}{\labelitemi}{\leftmargin=5pt}

\item[(i)] The foliation $\F$ is homogeneous. 

\item[(ii)] The right $\PSL$-action on $X=T^1 \F$ is conjugated to the natural right $\PSL$-action on some quotient of the Lie group 
$H = \PSL \times G$ by a cocompact discrete subgroup. 

\item[(iii)] Up to conjugation by a diffeomorphism between $\widetilde{M}$ and $\Hy \times G$, the holonomy group $\Gamma$ acts diagonally on $\Hy \times G$, that is, 
$\gamma.(z,g) = (\gamma(z),\rho(\gamma)g)$ for all $\gamma \in \Gamma$ and for all 
 $(z,g) \in \Hy \times G$. 
\end{list}
\end{proposition}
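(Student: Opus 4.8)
The plan is to establish the cyclic chain of implications (i) $\Rightarrow$ (ii) $\Rightarrow$ (iii) $\Rightarrow$ (i), concentrating the real work in the first step and treating the other two as translations between three models of the same object: the unit tangent bundle $X = T^1\F$, the developing picture on $\widetilde{M} \cong \Hy \times G$ supplied by Proposition~\ref{principal}, and the homogeneous model $\Gamma \rs H/K_0$. Throughout I would read ``conjugation'' as a $\PSL$-equivariant homeomorphism (a diffeomorphism when $\F$ admits a smooth uniformisation, exactly as in Proposition~\ref{principal}), and I would use repeatedly that $X$ is the bundle of $\PSL$-orbits over $M = X/PSO(2,\R)$, so that $\PSL$-equivariant maps of $X$ descend to leaf-preserving maps of $M$.

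For (i) $\Rightarrow$ (ii), suppose $\F$ is homogeneous via a connected Lie group $H$, a surjective morphism $\rho : H \to G$ with kernel $K = \PSL$, the compact subgroup $K_0 = PSO(2,\R)$, and a cocompact discrete $\Gamma \subset H$. Here $\widetilde{X} = T^1\widetilde{\F}$ identifies with $H$ itself (the unit tangent bundle of a fibre $hK/K_0 \cong \Hy$ being $hK \cong \PSL$), the right $\PSL$-action is right translation by $K$, and $X = \Gamma \rs H$. The point I would prove is that $H$ splits as $\PSL \times G$ compatibly with $\rho$ and $K$. Since $\PSL = PSL(2,\R)$ is centreless and its automorphism group $PGL(2,\R)$ has identity component $\mathrm{Inn}(\PSL) \cong \PSL$, the conjugation homomorphism $\phi : H \to \mathrm{Aut}(\PSL)$, whose kernel is the centraliser $Z_H(\PSL)$, has connected image; hence $\phi(H) \subseteq \mathrm{Inn}(\PSL) = \phi(K)$. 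This forces $H = K \cdot Z_H(\PSL)$, while $K \cap Z_H(\PSL) = Z(\PSL) = \{e\}$ and the two factors commute, so $(k,z) \mapsto kz$ is a Lie group isomorphism $\PSL \times Z_H(\PSL) \cong H$. Finally $\rho$ restricts to a continuous bijective homomorphism $Z_H(\PSL) \to G$, hence an isomorphism, under which $\rho$ becomes $p_2$ and $K$ becomes $\PSL \times \{e\}$. Thus $\Gamma$ becomes a cocompact discrete subgroup of $\PSL \times G$ and $X = \Gamma \rs (\PSL \times G)$ with its natural right $\PSL$-action, which is (ii).

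For (ii) $\Rightarrow$ (iii) I would lift the conjugacy with $\Gamma' \rs (\PSL \times G)$ to universal covers, so that $\widetilde{X} = \PSL \times G$ with $\PSL$ acting on the right of the first factor and each $\gamma' = (\phi,\psi) \in \Gamma'$ acting by the left translation $(f,g) \mapsto (\phi f, \psi g)$. Dividing by the right $PSO(2,\R)$-action identifies $\widetilde{M} = \widetilde{X}/PSO(2,\R)$ with $\Hy \times G$, and left translation by $(\phi,\psi)$ descends to $(z,g) \mapsto (\phi(z), \psi g)$, which is exactly the diagonal action of (iii) with $\PSL$-component $\phi$ and holonomy $\psi$. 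For (iii) $\Rightarrow$ (i), a diagonal action $\gamma\cdot(z,g) = (\gamma_{\PSL}(z), h(\gamma)g)$ on $\Hy \times G$ lifts to the unit-tangent level as the left translation $\gamma\cdot(f,g) = (\gamma_{\PSL} f, h(\gamma)g)$ on $\PSL \times G$. Since $\gamma \mapsto \gamma_{\PSL}$ and $h$ are homomorphisms and the original action is free, $\gamma \mapsto (\gamma_{\PSL}, h(\gamma))$ embeds $\Gamma$ as a subgroup $\Gamma'$ of $H = \PSL \times G$; it is discrete and cocompact because $\Gamma' \rs (\PSL \times G) = X = T^1\F$ is compact and the action is properly discontinuous. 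Taking $\rho = p_2$, $K = \PSL$ and $K_0 = PSO(2,\R)$ then exhibits $M = \Gamma' \rs H/K_0$ with $\F$ as the induced foliation, i.e. $\F$ is homogeneous.

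The only genuine obstacle is the splitting step inside (i) $\Rightarrow$ (ii): one must know that the extension $1 \to \PSL \to H \to G \to 1$ is in fact a direct product, and this is exactly where the rigidity of $\PSL$ (centreless, with finite outer automorphism group) and the connectedness of $H$ enter. The remaining implications are bookkeeping about how left translations on $\PSL \times G$, diagonal actions on $\Hy \times G$, and the homogeneous model correspond under the unit-tangent-bundle and $PSO(2,\R)$-quotient constructions; the only points there that I would watch are the freeness and discreteness of $\Gamma'$ and the continuous-versus-smooth category of the conjugations, both of which are controlled precisely as in Proposition~\ref{principal}.
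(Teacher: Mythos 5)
Your proposal is correct and follows the same cyclic scheme (i) $\Rightarrow$ (ii) $\Rightarrow$ (iii) $\Rightarrow$ (i) as the paper, with (ii) $\Rightarrow$ (iii) and (iii) $\Rightarrow$ (i) being essentially the same bookkeeping in both treatments. The genuine difference is in the splitting step of (i) $\Rightarrow$ (ii). The paper works at the infinitesimal level: it invokes a result of H.~Cartan to split the Lie algebra as $\mathfrak{h} = \mathfrak{sl}(2,\R) \oplus \mathfrak{g}$ (the semisimple ideal $\mathfrak{sl}(2,\R)$ admits a complementary ideal), integrates to get $\widetilde{H} \cong \widetilde{PSL}(2,\R) \times G$, and then uses $\pi_1(H) \cong \pi_1(\PSL)$ (which relies on the standing reduction to $G$ simply connected) to descend to $H \cong \PSL \times G$. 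You instead argue globally: since $\PSL$ is centreless and the identity component of ${\rm Aut}(\PSL) \cong PGL(2,\R)$ is ${\rm Inn}(\PSL)$, the conjugation map $H \to {\rm Aut}(K)$ has image contained in $\phi(K)$, whence $H = K \cdot Z_H(K)$ is an internal direct product and $\rho$ identifies $Z_H(K)$ with $G$. Your route avoids both the Lie-algebra splitting theorem and the fundamental-group computation, does not need $G$ simply connected for this step, and has the small additional merit of making explicit that the isomorphism $H \cong \PSL \times G$ carries $K$ to $\PSL \times \{e\}$ and $\rho$ to $p_2$, a compatibility the paper leaves implicit; the paper's route is the more standard structural argument and generalises to other semisimple kernels without knowing ${\rm Aut}$ explicitly. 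Both are sound.
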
 

\begin{proof} As in the proof of Proposition~\ref{principal}, we are assuming that $\widetilde{M}$ and $G$ are simply connected. We also keep the notation just described.  Now we prove the proposition through the following cycle of implications: 
\medskip 

\noindent
$(i) \Rightarrow (ii)$ According to a result of H. Cartan, see for example \cite{S}, the Lie algebra $\mathfrak{h}$ of the Lie group $H$ split into the direct sum $\mathfrak{h} = \mathfrak{sl}(2,\R) \oplus \mathfrak{g}$ of the Lie algebras of $\PSL$ and $G$. 
Then the simply connected Lie group $\widetilde{H}$ integrating $\mathfrak{h}$ split into the product $\widetilde{PSL}(2,\R) \times G$ where $\widetilde{PSL}(2,\R)$ is the universal covering of $\PSL$.  Moreover, the fundamental group of $H$ is isomorphic to the fundamental group of $\PSL$. If follows that 
$H$ is isomorphic to $\PSL \times G$. 
\medskip 

\noindent
$(ii) \Rightarrow (iii)$ By hypothesis, the action of the holonomy group $\Gamma$ on $\widetilde{X}$ is conjugated to the action of some discrete cocompact subgroup of $H = \PSL \times G$. Then the $\Gamma$-action on $\widetilde{M}$ is conjugated to a diagonal action on  $H / PSO(2,\R)  \cong \Hy \times G$. 
\medskip 

\noindent
$(iii) \Rightarrow (i)$ Assume the holonomy group $\Gamma$ acts diagonally on the universal covering $\widetilde{M}$. Then, up to conjugation by a diffeomorphism between $\widetilde{M}$ and $\Hy \times G$, the $\Gamma$-action on $\widetilde{X} = T^1\widetilde{\F}$ is conjugated to the natural left action of a discrete cocompact subgroup of $H = \PSL \times G$ and therefore $X = T^1\F$ becomes diffeomorphic to the corresponding quotient of $H$, endowed with the natural $\PSL$-action. \end{proof} 

In \cite{HMM}, G. Hector, S. Matsumoto and G. Meigniez constructed an example of minimal $\PSL$-Lie foliation by hyperbolic surfaces which is not homogeneous. Comparing with Propositions~\ref{principal}~and~\ref{homogeneous}, the universal covering $\widetilde{M}$ is diffeomorphic to $\Hy \times \PSL$, but he holonomy group $\Gamma$  does not act diagonally. However, in the homogeneous setting, Theorem~\ref{thm2} can be immediately deduced as a corollary of Proposition~\ref{homogeneous} and Theorem~\ref{thm1}: 

\begin{proof}[Proof of Theorem~\ref{thm2}] Let $\F$ be a $G$-Lie foliation of a compact connected manifold $M$ whose leaves are hyperbolic surfaces. The natural right $\PSL$-action on the unit tangent bundle $X = T^1 \F$ is minimal if and only if $\F$ is minimal because they have the same holonomy representation $h : \Gamma \to G$. Assuming $\F$ is homogeneous and using Proposition~\ref{homogeneous}, we can apply Theorem~\ref{thm1} to deduce that $\PSL$-minimality and $U$ minimality are equivalent on $X = T^1 \F$.
\end{proof}Ê

As we already mentioned in the introduction, when we replace the Lie group $G$ with the quotient $G/G_0$ by a compact Lie subgroup $G_0$, we obtain an example of Riemannian foliation where the distance between two leaves (deduced from a left-invariant Riemannian metric on $G$ that is also invariant by the right $G_0$-action) remains locally constant. In general, a foliation $\F$ is said to be {\em Riemannian} when the distance between two leaves verifies this property, see  \cite{G}, \cite{Mo1} and \cite{Mo2}. 

\begin{examples} \label{examplesRiemann}
(i) Consider the $\PSL$-Lie foliation constructed in Examples \ref{examplesLie}.(i) by quotienting the Lie group $H = \PSL \times \PSL$ by an irreducible cocompact discrete subgroup $\Gamma$. Assuming $\Gamma$ torsion-free, the $\Gamma$-action on \mbox{$\Hy \times \Hy$}
$\cong H / PSO(2,\R) \times PSO(2,\R)$ is free and proper, so the horizontal foliation of $\Hy \times \Hy$ induces a minimal Riemannian foliation $\F$ on the quotient manifold $M = \Gamma \rs \Hy \times \Hy$. The foliation $\F$ lifts to a minimal $\PSL$-Lie foliation $\F_T$ on $E_T = \Gamma \rs \Hy \times \PSL$ defined by the representation of $\Gamma$ onto the dense subgroup $p_2(\Gamma)$ of $\PSL$. 
Notice that $E_T$ is a principal $PSO(2,\R)$-bundle on $M$ whose elements are positively-oriented orthonormal frames for the normal bundle to the foliation. If $\Gamma$ is the product of two torsion-free cocompact Fuchsian groups, we have again a Riemannian foliation $\F$ on $M = \Gamma \rs \Hy \times \Hy$, but the lifted foliation on $E_T = \Gamma \rs \Hy \times \PSL$ is not longer minimal (since the leaves closures are parametrised by the compact manifold  $p_2(\Gamma) \rs \PSL$). 
\medskip 

\noindent
(ii) According to the construction given in \cite{BGSS}, let $h$ be an injective group homomorphism  of a torsion-free discrete subgroup $\Gamma$ of $\PSL$ into $SO(3,\R)$ such that $\overline{h(\Gamma)} = SO(3,\R)$. As observed in Examples \ref{examplesLie}.(ii), the horizontal foliation of  $H = \PSL \times SO(3,\R)$ induces a minimal $SO(3)$-Lie foliation on the quotient of $H$ by the image of the injective group homomorphism $i : \Gamma \to \PSL \times SO(3,\R)$ deduced from $h$. Thus $\Gamma$ acts freely and properly on the product $\Hy \times S^2$ and  the quotient manifold $M = \Gamma \rs \Hy \times S^2$ admits a minimal Riemannian foliation $\F$, which can be directly defined by the suspension of the representation of $\Gamma$ as a group of orientation-preserving isometries of $S^2$. As before, the foliation $\F$ lifts to a minimal 
$SO(3,\R)$-Lie foliation $\F_T$ on $E_T = \Gamma \rs \Hy \times SO(3,\R)$ defined by the representation $h : \Gamma \to SO(3,\R)$. 
\medskip 

In both examples, the leaves of $\F$ are dense hyperbolic planes and cylinders. By replacing the unit tangent bundle 
$X = T^1 \F$ by $X_T = T^1 \F_T = \Gamma \rs \PSL \times G$ where $G = \PSL$ or $G = SO(3,\R)$, we can derive $U$-minimality on $X$ from 
$U$-minimality on $X_T$. The same strategy can be applied to general Riemannian foliations by using Molino's theory \cite{Mo1} and more specifically the following important result:
\end{examples}

\begin{namedstheorem}[Molino] If $\F$ is a smooth Riemannian foliation of a compact connected manifold, then $\F$ lifts to a smooth foliation $\F_T$  on the transverse orthonormal frame bundle $E_T$ of $\F$ such that
\begin{list}{\labelitemi}{\leftmargin=5pt}

\item[(i)] the closures of the leaves of $\F_T$ are the fibres of a locally trivial smooth fibration $\pi_T : E_T \to B_T$;

\item[(ii)] there is a Lie group $G$ such that $\F_T$ induces a $G$-Lie foliation with dense leaves  \hspace*{-8pt} on each fibre of $\pi_T$.
\end{list}
\end{namedstheorem}

\noindent
Let us explain how to construct the lifted foliation $\F_T$. Assume $\F$ is given by foliated charts 
$\varphi _ i : U_i \to P_i \times T_i$ from open subsets $U_i$ that covers $M$ to the product of open discs $P_i$ and $T_i$ in $\mathbb{R}^p$ and $\mathbb{R}^q$ respectively. If we can endow each local transversal $T_i$ with a Riemannian metric $g_i$ that is invariant by the changes of chart, the foliation $\F$ is Riemannian. 
From this local point of view, it is clear that each canonical projection $\pi_i = p_2 \scirc \varphi_i : U_i \to T_i$ becomes a Riemannian submersion, so the lifted foliation $\F_T$ is defined by the projection
$\pi_{i_\ast} : E_T |_{U_i} = p_T^{-1}(U_i) \to E_i$ where $p_T : E_T \to M$ is the bundle map and $E_i$ is the orthonormal frame $O(q,\R)$-bundle over $T_i$. By construction, if $\F$ is a foliation by hyperbolic surfaces, then $\F_T$ is also a foliation by hyperbolic surfaces.  As in Examples~\ref{examplesRiemann}, the $U$-minimality problem for Riemannian foliations by hyperbolic surfaces can be reduced to the simpler case of Lie foliations by hyperbolic surfaces:

\begin{proposition} \label{thmriemann} Let $\F$ be a minimal Riemannian foliation by hyperbolic surfaces of a compact connected manifold $M$. 
Let  $X = T^1\F$ and $X_T = T^1 \F_T$ be the unit tangent bundles of $\F$ and $\F_T$. For $F =U$, $B$ or $\PSL$, if the right $F$-action on $X_T$ is minimal, then the right $F$-action on $X$ is minimal. 
\end{proposition}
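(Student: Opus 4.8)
The plan is to exhibit $X = T^1\F$ as a continuous, surjective, equivariant image of $X_T = T^1\F_T$, and then to transport minimality along this map. First I would fix, from the Structure Theorem, the bundle projection $p_T : E_T \to M$ and recall that $\F_T$ is again a foliation by hyperbolic surfaces whose leaves cover those of $\F$. More precisely, since the leafwise hyperbolic metric of $\F_T$ is the pullback under $p_T$ of that of $\F$, the restriction $p_T|_{\widetilde L} : \widetilde L \to L$ of $p_T$ to each leaf $\widetilde L$ of $\F_T$ is a covering of the corresponding leaf $L$ of $\F$ that is at the same time a local isometry.

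Differentiating leafwise then produces the map I want. Because $p_T|_{\widetilde L}$ is a local isometry, its derivative carries unit tangent vectors to unit tangent vectors, and assembling these derivatives over all leaves defines
\[
\Pi : X_T = T^1\F_T \longrightarrow T^1\F = X, \qquad \Pi(e,v) = \bigl(p_T(e),\, dp_T(v)\bigr),
\]
which is simply the restriction of the smooth bundle map $dp_T : TE_T \to TM$ to leafwise unit tangent vectors. I would then record three properties. It is continuous, being such a restriction. It is surjective: given $(m,w) \in X$, any $e \in p_T^{-1}(m)$ lies on a leaf $\widetilde L$ covering $L$, and $dp_T$ restricts to a linear isometry of $T_e\widetilde L$ onto $T_m L$, so $w$ is attained. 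And it is equivariant for the right $\PSL$-action: working leafwise and identifying $T^1\widetilde L \cong \Gamma_{\widetilde L}\rs \PSL$ and $T^1 L \cong \Gamma_L \rs \PSL$ with $\Gamma_{\widetilde L}\subseteq \Gamma_L$, the map induced by the covering $\widetilde L \to L$ is the natural projection $\Gamma_{\widetilde L}\rs \PSL \to \Gamma_L \rs \PSL$, which commutes with the right $\PSL$-action. In particular $\Pi$ is equivariant for the right $F$-action when $F = U$, $B$ or $\PSL$.

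The conclusion is then the elementary push-forward of minimality. Suppose the right $F$-action on $X_T$ is minimal. Given $x \in X$, choose $\widetilde x \in \Pi^{-1}(x)$; by minimality $\overline{\widetilde x F} = X_T$, so using continuity, surjectivity and equivariance of $\Pi$,
\[
X = \Pi(X_T) = \Pi\bigl(\overline{\widetilde x F}\bigr) \subseteq \overline{\Pi(\widetilde x F)} = \overline{\Pi(\widetilde x)\,F} = \overline{x F}.
\]
Hence $\overline{xF} = X$ for every $x \in X$, that is, the right $F$-action on $X$ is minimal, which is the assertion for $F = U$, $B$ and $\PSL$.

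The only delicate point I anticipate lies in the first two steps: extracting cleanly from Molino's construction that $p_T$ restricts to genuine leafwise local isometries of the hyperbolic leaves (so that $dp_T$ preserves unit length and surjectivity holds), and that these restrictions are precisely covering maps intertwining the $\PSL$-action. Once $\Pi$ is in hand with these three properties, the transfer of minimality is immediate from the displayed chain of inclusions.
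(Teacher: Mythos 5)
Your proof is correct and follows essentially the same route as the paper: both arguments rest on the observation that $X_T = T^1\F_T$ fibres over $X = T^1\F$ via a continuous, surjective, right-$\PSL$-equivariant projection (the paper phrases this as $X_T$ being an $O(q,\R)$-principal bundle over $X$, which is exactly your leafwise derivative $\Pi = dp_T$), along which minimality pushes forward. The only difference is that the paper's proof also takes the opportunity to show that $\F_T$ is itself a \emph{minimal} $G$-Lie foliation (using compactness of $O(q,\R)$ and minimality of $\F$), a fact not needed for the transfer itself but used immediately afterwards to derive Corollary~\ref{corthm2}.
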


\begin{proof}  We first see that, under the conditions above, $\F_T$ is a minimal $G$-Lie foliation by hyperbolic surfaces. Let $L_T$ be any fibre of the basic fibration $\pi_T : E_T \to B_T$, and let 
$\F_T |_{\textstyle L_T}$ be the $G$-Lie foliation induced by $\F_T$ on $L_T$. Any closed subset 
$C \subset L_T$ saturated by $\F_T |_{\textstyle L_T}$ is also a closed subset of $E_T$ saturated by $\F_T$. Since $O(q,\R)$ is compact, its image $p_T(C)$ is a closed subset of $M$ saturated by $\F$. Now, since $\F$ minimal, we have $p_T(C) = M$ and hence the fibre $F_T$ projects on the whole manifold $M$. In other words, $F_T = E_T$ and $B_T$ reduces to one point. Thus, according to Molino's theorem,  $\F_T$ is a minimal $G$-Lie foliation. By construction, its unit tangent bundle $X_T = T^1 \F_T$ is a $O(q,\R)$-principal bundle over  $X = T^1 \F$ and the right $\PSL$-action on $X$ is induced by the right $\PSL$-action on $X_T$. Finally, if the right $F$-action on $X_T$ is minimal for $F =U$, $B$ or $\PSL$, then the right $F$-action on $X$ is minimal too. 
\end{proof}

A minimal Riemannian foliation $\F$ is homogeneous if and only if the lifted foliation $\F_T$ is homogeneous. In this case, using Proposition~\ref{thmriemann}, we obtain the following corollary of Theorem~\ref{thm2}: 

\begin{corollary} \label{corthm2}
 Let $X = T^1\F$ be the unit tangent bundle of a minimal Rieman\-nian foliation $\F$ by hyperbolic surfaces of a compact manifold $M$. 
 Assume $\F$ is homogeneous. Then the right $U$-action on $X$ is minimal. 
\end{corollary}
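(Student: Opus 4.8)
The plan is to reduce Corollary~\ref{corthm2} to Theorem~\ref{thm2} by transporting $U$-minimality along the principal bundle $X_T = T^1\F_T \to X = T^1\F$ supplied by Molino's theory. First I would invoke Proposition~\ref{thmriemann}, which already does most of the structural work: it shows that when $\F$ is a minimal Riemannian foliation by hyperbolic surfaces, the lifted foliation $\F_T$ on the transverse orthonormal frame bundle $E_T$ is in fact a \emph{minimal} $G$-Lie foliation by hyperbolic surfaces (the basic fibration degenerates to a point because $O(q,\R)$ is compact and $\F$ is minimal). This places $\F_T$ squarely within the hypotheses of Theorem~\ref{thm2}, provided I also know $\F_T$ is homogeneous.

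The homogeneity hypothesis is exactly where I would insert the remark stated just before the corollary: \emph{$\F$ is homogeneous if and only if $\F_T$ is homogeneous}. Assuming $\F$ homogeneous, $\F_T$ is a homogeneous $G$-Lie foliation by hyperbolic surfaces on the compact manifold underlying $E_T$, so Theorem~\ref{thm2} applies directly and yields that the right $U$-action on $X_T = T^1\F_T$ is minimal. This is the crux of the argument: Theorem~\ref{thm2} gives $U$-minimality \emph{upstairs}, on the frame-bundle level.

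Finally I would descend from $X_T$ to $X$ using the second half of Proposition~\ref{thmriemann}, applied with $F = U$: since the right $U$-action on $X_T$ is minimal and $X_T = T^1\F_T$ is an $O(q,\R)$-principal bundle over $X = T^1\F$ on which the right $U$-action is induced from the one on $X_T$, the right $U$-action on $X$ is minimal too. Concretely, the projection $p_T \colon X_T \to X$ intertwines the two $U$-actions, so the image of a dense $U$-orbit is a dense $U$-orbit, and minimality passes to the quotient.

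The argument is therefore essentially a two-line chaining of results already in hand, and I do not expect a genuine obstacle in the body of the proof. The only point requiring care — and the place I would spend a sentence justifying rather than asserting — is the equivalence \emph{$\F$ homogeneous $\iff \F_T$ homogeneous}, which is what licenses the application of Theorem~\ref{thm2} to $\F_T$; one direction (homogeneity of $\F$ forces the holonomy of $\F_T$ to act diagonally, via the criterion (iii) of Proposition~\ref{homogeneous}) is the one actually used here. Everything else is the compactness of $O(q,\R)$ doing its work through Proposition~\ref{thmriemann}.
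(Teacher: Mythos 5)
Your proposal is correct and follows exactly the route the paper takes: it invokes the equivalence between homogeneity of $\F$ and of $\F_T$, applies Theorem~\ref{thm2} to the minimal homogeneous $G$-Lie foliation $\F_T$ (whose minimality is established inside the proof of Proposition~\ref{thmriemann}), and then descends $U$-minimality from $X_T$ to $X$ via Proposition~\ref{thmriemann}. The paper states this chain even more tersely than you do, so your write-up is if anything a more careful version of the same argument.
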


\noindent
In fact, any minimal Riemannian foliation is transversely homogeneous, like the transversely hyperbolic and transversely elliptic foliations described in Examples~\ref{examplesRiemann}, see \cite{G}.  Now it is a natural question to ask if the generalisation of Hedlund's theorem holds for transversely homogeneous foliations.

\section{Proof of Theorem~\ref{thm3}} \label{Section3}

 In this section, we prove that Theorem~\ref{thm2} fails when we consider a transversely homogeneous foliation instead a $G$-Lie foliation. We start by exhibiting a first example of transversely projective counter-example: 
 
\begin{example} \label{firstexample}
Let $\Gamma$ be a torsion-free discrete subgroup of $\PSL$. Consider its diagonal action on  $\PSL \times \partial \Hy$ given by 
$\gamma(f,\xi) = (\gamma f, \gamma(\xi))$ for all $\gamma \in \Gamma$ and for all $(f,\xi) \in \PSL \times \partial \Hy$. 
If $\Gamma$ is cocompact, then $\Gamma$ acts minimally on $\partial \Hy$ and hence the right $\PSL$-action on $X =  \Gamma \rs \PSL \times \partial \Hy$ is minimal. However, the right $B$-action is not minimal because the dual $\Gamma$-action on $\partial \Hy \times \partial \Hy$ is not minimal. More precisely, the diagonal set $\Delta$ consisting of all pairs $(\xi,\xi)$ is a non-trivial $\Gamma$-invariant closed subset of $\partial \Hy \times \partial \Hy$. This means that neither Proposition~\ref{charactB}, nor Proposition~\ref{cocompactB} can be extended to this more general context. In fact $\Delta$ is the unique non-empty $\Gamma$-minimal subset of  $\partial \Hy \times  \partial \Hy$. By duality, 
$$\M = \{ \, \Gamma \Big( \pm\!\matrice{a}{b}{c}{d}, \frac a c \Big) \, / \, \pm\!\matrice{a}{b}{c}{d}\in \PSL \, \}$$
 is the unique non-empty $B$-minimal subset of $X$, proving the first part of Theorem~\ref{thm3}. However, 
we have the following result: 
 
 \begin{proposition} \label{U-minimalset}
 The set $\M$ is the unique non-empty $U$-minimal subset of $X$. 
 \end{proposition}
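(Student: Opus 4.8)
The plan is to prove two things: that $\M$ is itself $U$-minimal, and that any nonempty $U$-minimal subset of $X$ must coincide with it. For the first point, I would identify $\M$ with $\Gamma \rs \PSL$ carrying the horocycle flow. The map $\psi : \Gamma \rs \PSL \to X$ sending $\Gamma g$ to $\Gamma(g, g(\infty))$ is well defined, since $(\gamma g)(\infty) = \gamma(g(\infty))$ for $\gamma \in \Gamma$; it is injective because $\Gamma(g,g(\infty)) = \Gamma(g',g'(\infty))$ forces $\Gamma g = \Gamma g'$, and its image is exactly $\M$, which is therefore closed as the continuous image of the compact space $\Gamma\rs\PSL$. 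Since every $u \in U$ fixes $\infty$, we have $(gu)(\infty) = g(\infty)$, so $\psi$ intertwines the right $U$-action on $\Gamma\rs\PSL$ with the right $U$-action on $\M$. Thus the $U$-action on $\M$ is conjugate to the horocycle flow on $\Gamma\rs\PSL$, which is minimal by Hedlund's Theorem, and hence $\M$ is $U$-minimal.

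For uniqueness, the key device is the $U$-invariant ``endpoint'' map. I would work with the map $P : \PSL \times \partial\Hy \to \partial\Hy \times \partial\Hy$ given by $P(g,\eta) = (g(\infty), \eta)$, which is continuous and $\Gamma$-equivariant for the diagonal actions (again by $(\gamma g)(\infty) = \gamma(g(\infty))$) and constant on right $U$-orbits because $U$ fixes $\infty$. By construction, a class $\Gamma(g,\eta)$ lies in $\M$ if and only if $g(\infty) = \eta$, i.e. exactly when $P(g,\eta)$ lies on the diagonal $\Delta = \{(\zeta,\zeta)\}$ of $\partial\Hy \times \partial\Hy$.

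Now let $\mathcal N$ be any nonempty $U$-minimal subset of $X$. Since $X$ is compact, $\mathcal N$ is compact; let $\widetilde{\mathcal N} \subset \PSL \times \partial\Hy$ be its $\Gamma$-invariant preimage, which is closed and $U$-invariant. The set $P(\widetilde{\mathcal N})$ is $\Gamma$-invariant, so its closure is a nonempty closed $\Gamma$-invariant subset of the compact space $\partial\Hy \times \partial\Hy$, and therefore contains a $\Gamma$-minimal set. By the uniqueness established in Example~\ref{firstexample}, that minimal set is the diagonal $\Delta$, so $\Delta \subseteq \overline{P(\widetilde{\mathcal N})}$. To convert this into an honest intersection, I would take $(\zeta,\zeta) \in \Delta$ and a sequence $(g_n,\eta_n) \in \widetilde{\mathcal N}$ with $P(g_n,\eta_n) \to (\zeta,\zeta)$; after passing to a subsequence the classes $\Gamma(g_n,\eta_n) \in \mathcal N$ converge (by compactness of $\mathcal N$) to some $\Gamma(g,\eta) \in \mathcal N$, and continuity together with $\Gamma$-minimality of $\Delta$ forces $(g(\infty),\eta) \in \overline{\Gamma(\zeta,\zeta)} = \Delta$, i.e. $g(\infty) = \eta$. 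Hence $\Gamma(g,\eta) \in \mathcal N \cap \M$, so $\mathcal N \cap \M \neq \emptyset$.

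To conclude, $\mathcal N \cap \M$ is a nonempty closed $U$-invariant subset of both minimal sets $\mathcal N$ and $\M$, so it equals each of them, giving $\mathcal N = \M$. The main obstacle is the first step: proving $\M$ is $U$-minimal genuinely requires Hedlund's Theorem through the identification $\M \cong \Gamma\rs\PSL$, and it is precisely this stronger input (rather than the already-established $B$-minimality of $\M$) that makes $U$-minimality hold. The second, more structural idea — pushing an arbitrary $U$-minimal set through the endpoint map and invoking the known uniqueness of the diagonal $\Gamma$-minimal set — is what forces every $U$-minimal set to meet, and hence equal, $\M$; the only technical care needed there is the compactness argument upgrading a statement about closures into a genuine point of intersection.
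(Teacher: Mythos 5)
Your first step is sound: the map $\Gamma g\mapsto\Gamma(g,g(\infty))$ is indeed a $U$-equivariant homeomorphism from $\Gamma\rs\PSL$ onto $\M$, so Hedlund's Theorem gives $U$-minimality of $\M$; this is the paper's argument read upstairs rather than in the dual picture (the paper phrases it as minimality of the diagonal $\Gamma$-action on the incidence set $\mathcal{K}\subset E\times\partial\Hy$). The gap is in the uniqueness step. Your endpoint map $P(g,\eta)=(g(\infty),\eta)$ has non-compact fibres (cosets of $B$), so $P(\widetilde{\mathcal N})$ need not be closed, and the inclusion $\Delta\subseteq\overline{P(\widetilde{\mathcal N})}$ does not by itself produce a point of $\widetilde{\mathcal N}$ lying over $\Delta$. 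Your attempted repair breaks at the sentence ``continuity together with $\Gamma$-minimality of $\Delta$ forces $(g(\infty),\eta)\in\Delta$'': the convergence $\Gamma(g_n,\eta_n)\to\Gamma(g,\eta)$ in $X$ only means $\gamma_n(g_n,\eta_n)\to(g,\eta)$ for some sequence $\gamma_n\in\Gamma$ that you do not control. If $\{\gamma_n\}$ diverges it acts on $\partial\Hy$ with north--south dynamics (Lemma~\ref{keylemma}), and applying $\gamma_n$ to the points $(g_n(\infty),\eta_n)$ --- which converge to $(\zeta,\zeta)$ but need not lie on $\Delta$ --- can produce a limit off the diagonal precisely when $\zeta$ is the repelling point $\xi^-$ of the sequence. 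The $\Gamma$-invariance of $\Delta$ is of no help here, because the points being moved are not in $\Delta$; so the implication ``$\Delta\subseteq\overline{P(\widetilde{\mathcal N})}$ implies $\mathcal N\cap\M\neq\emptyset$'' is not established.

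The idea of a $U$-invariant observable does work, but you must choose one that actually descends to $X$. Take $\Theta(g,\eta)=g^{-1}(\eta)\in\partial\Hy$. It is $\Gamma$-invariant, since $(\gamma g)^{-1}(\gamma\eta)=g^{-1}(\eta)$, hence defines a continuous map $\Theta:X\to\partial\Hy$ with $\Theta(xu)=u^{-1}\Theta(x)$ and $\M=\Theta^{-1}(\infty)$. For any nonempty $U$-minimal set $\mathcal N$, the image $\Theta(\mathcal N)$ is a nonempty compact subset of $\R\cup\{\infty\}$ invariant under all translations, and the unique minimal set of the translation action is $\{\infty\}$; hence $\infty\in\Theta(\mathcal N)$, so $\mathcal N\cap\M\neq\emptyset$ and minimality of both sets gives $\mathcal N=\M$. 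This repaired route does not even need the uniqueness of $\Delta$ among $\Gamma$-minimal subsets of $\partial\Hy\times\partial\Hy$. The paper argues differently: it works dually in $E\times\partial\Hy$ with $E=\R^2-\{0\}/\{\pm Id\}$ and shows that every $\Gamma$-orbit closure meets $\mathcal{K}$ by pushing an arbitrary point with a Schottky pair of hyperbolic elements, which is more hands-on but serves the same purpose.
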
 
 
 \begin{proof} 
 According to Hedlund's theorem, the $U$-action on $\Gamma \rs \PSL$ is minimal. By duality, the $\Gamma$-action on $E = \R^2 - \{0\}/\{\pm Id\}$ is minimal too. This implies that the diagonal $\Gamma$-action on the subset 
 $$
\mathcal{K} = \{ \, (v,\xi) \in E \times \partial \Hy \, / \mbox{ $v$ is collinear to $\vect{\xi}{1}$ if $\xi \neq \infty$ and 
collinear to $\vect{1}{0}$ if $\xi = \infty$ } \} 
 $$
 is minimal. Coming back to $X$ and using again duality, we obtain that $\M$ is $U$-minimal. Let $\gamma_1$ and $\gamma_2$ two hyperbolic isometries in $\Gamma$ generating a Schottky group, that is, the fundamental group of a pair of pants. Since $\Gamma$ acts minimal on $E$, for each point 
 $(v,\xi) \in E \times \partial \Hy$, there exists $(v_1,\xi_1) \in \Gamma(v,\xi)$ such that $\gamma_1 v_1 = \lambda_1 v_1$ with $|v_1| > 1$. 
 Moreover, since $\gamma_1$ and $\gamma_2$ have no common fixed points, for some sequences $\{ p_n \}_{n \geq 0}$ and $\{ q_n \}_{n \geq 0}$ in $\Z$, we have:
 $$
 \lim_{n \to + \infty} \gamma_2^{p_n} \gamma_1^{q_n} v_1 = v_2 \quad
 \mbox{ and } \quad
 \lim_{n \to + \infty} \gamma_2^{p_n} \gamma_1^{q_n} \xi_1 = \xi_2 \vspace{-1ex}
 $$
 where $\gamma_2 v_2 = \lambda_2 v_2$ with $|v_2| > 1$ and $v_2$ is collinear to $\vect{\xi_2}{1}$ or $\vect{1}{0}$.  It follows that 
 $\overline{\Gamma(v,\xi)} \cap \mathcal{K} \neq \emptyset$ and hence $\overline{\Gamma(v,\xi)} = \mathcal{K}$.
 \end{proof}
 
Notice also that the dynamics of the $B$-action on $X - \M$ are related to the dynamics of the geodesic flow on 
$\Gamma \rs \PSL$ since each point $(\xi^-,\xi^+) \in \partial \Hy \times \partial \Hy -\Delta$ represents a geodesic in $\Hy$. 
Like in Examples~\ref{examplesRiemann}, $\Gamma$ acts freely and properly discontinuously on  $\Hy \times  \partial \Hy$, so the horizontal foliation of $\Hy \times  \partial \Hy$ induces a foliation $\F$ on the quotient manifold $M = \Gamma \rs \Hy \times \partial \Hy$ whose unit tangent bundle is $X$. 
\end{example}

\begin{proposition} \label{Bfirstexample}
The transversely homographic foliation $\F$ is defined by a locally free $B$-action whose orbits are dense hyperbolic planes and cylinders.
\end{proposition}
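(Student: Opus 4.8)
The plan is to identify $(M,\F)$ with $\Gamma\rs\PSL$ foliated by the orbits of the right $B$-action, so that the desired action is simply right translation. The conceptual key is that $\PSL$ acts \emph{simply transitively} on $\Hy\times\partial\Hy$ by the diagonal action $f\cdot(z,\xi)=(f(z),f(\xi))$: transitivity holds because $\PSL$ is transitive on $\Hy$ and the stabiliser $PSO(2,\R)$ of an interior point acts transitively on $\partial\Hy$ by rotations, while freeness holds because a non-trivial rotation about a point of $\Hy$ fixes no boundary point. Choosing the basepoint $(i,\infty)$ and recalling that $B$ is the stabiliser of $\infty$, I obtain a diffeomorphism
$$\Psi:\PSL\to\Hy\times\partial\Hy,\qquad \Psi(f)=(f(i),f(\infty)),$$
which intertwines left $\Gamma$-translation with the diagonal $\Gamma$-action since $\Psi(\gamma f)=\gamma\cdot\Psi(f)$. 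Hence $\Psi$ descends to a diffeomorphism $\overline{\Psi}:\Gamma\rs\PSL\to M$.

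First I would check that $\Psi$ carries the orbit foliation of the right $B$-action on $\PSL$ onto the horizontal foliation $\widetilde\F$. Indeed, as $b$ runs over $B$ one has $\Psi(fb)=(f(b(i)),f(\infty))$, and since $B$ acts transitively on $\Hy$ the $\Hy$-coordinate sweeps out all of $\Hy$ while the boundary coordinate stays equal to $f(\infty)$; thus $\Psi(fB)=\Hy\times\{f(\infty)\}$ is exactly a horizontal leaf. Because right translation by $B$ commutes with left translation by $\Gamma$, the right $B$-action descends through $\overline{\Psi}$ to a right $B$-action on $M$ whose orbits are precisely the leaves of $\F$. This already exhibits $\F$ as defined by a $B$-action; it remains to verify local freeness and to describe the orbits.

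For local freeness I would compute the stabiliser of $\Gamma f$: it equals $\{b\in B:\Gamma fb=\Gamma f\}=B\cap f^{-1}\Gamma f$, the intersection of the closed subgroup $B$ with the discrete group $f^{-1}\Gamma f$, hence discrete. As $\dim B=2$ equals the leaf dimension, each orbit is an immersed surface realising the corresponding leaf. To identify its type, note that the orbit through $\Gamma f$ is $(f^{-1}\Gamma f\cap B)\rs B$, which via the $B$-equivariant diffeomorphism $b\mapsto b(i)$ becomes $(f^{-1}\Gamma f\cap B)\rs\Hy$ with its hyperbolic metric. Since $\Gamma$ is cocompact it has no parabolic elements, so every non-trivial element of $f^{-1}\Gamma f\cap B$ is hyperbolic with $\infty$ among its fixed points; such a discrete subgroup is trivial or infinite cyclic, whence the orbit is a hyperbolic plane or a hyperbolic cylinder. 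Finally, density of every orbit follows from the minimality of the $\Gamma$-action on $\partial\Hy$ already used in Proposition~\ref{cocompactB} (the limit set of the cocompact group $\Gamma$ is all of $\partial\Hy$): the closure of the orbit of $\Gamma f$ corresponds under $\overline{\Psi}$ to the closure of the $\Gamma$-orbit of $f(\infty)$, which is $\partial\Hy$.

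The main obstacle is essentially bookkeeping: pinning down the simply transitive identification and the coset correspondence precisely, so that the horizontal leaves match the orbits of the \emph{right} $B$-action (and not of some conjugated action) and so that the $B\cong\Hy$ identification transports the hyperbolic metric correctly. Once $\Psi$ is in place, local freeness and the plane/cylinder dichotomy are immediate from the discreteness of $\Gamma$ and the absence of parabolics, and density is exactly the minimality invoked above.
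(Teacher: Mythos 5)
Your proof is correct and follows essentially the same route as the paper: the diffeomorphism $\overline{\Psi}:\Gamma\rs\PSL\to M$, $\Gamma f\mapsto\Gamma(f(i),f(\infty))$, coincides with the paper's identification of $M$ with the unique $B$-minimal set $\M\subset T^1\F$ via the bundle map $\pi$ (whose points are exactly the classes $\Gamma(g,g(\infty))$), and both arguments then transport the right $B$-action through this identification. Your version is merely more self-contained, deriving the identification from the simple transitivity of $\PSL$ on $\Hy\times\partial\Hy$ and spelling out the local freeness, the plane/cylinder dichotomy and the density that the paper asserts directly from the construction.
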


\begin{proof} By construction, since $\partial \Hy$ is identified to homogeneous space $\PSL/B$, $\F$ is a minimal transversely homographic foliation whose leaves are dense hyperbolic planes and cylinders. To prove that they are the orbits of a smooth $B$-action on $M$, we use an idea of 
Mart\'{\i}nez and Verjovsky from \cite{MV}. Indeed, the bundle map $\pi : X = T^1 \F \to M$ becomes a diffeomorphism from the unique $B$-minimal set 
$\M$ onto the quotient manifold
$$M = \{  \, \Gamma \big(\frac{ai+b}{ci+d},\frac a c \big) \,  /   \, \pm\!\matrice{a}{b}{c}{d}\in \PSL \,\}.$$
It follows that $\F$ is defined by a locally free $B$-action, which is conjugated to the $B$-action on $\M$. 
\end{proof} 

We are now interested to provide another counter-example (locally modelled by $\PSL \times \R$) having a non-trivial $B$-minimal set which is not $U$-minimal. Although the construction is classical, see \cite{GS}, we recall some details. Thus, 
any matrix 
$$
A = \matrice{a}{b}{c}{d} \in SL(2,\Z)
$$
defines an orientation-preserving automorphism of the torus $T^2 = \R^2/\Z^2$.
The Lie group automorphism $(z,t) \in T^2 \times \R \mapsto (A(z),t+1) \in T^2 \times \R$
generates a free and properly discontinuous $\Z$-action on the product $T^2 \times \R$. Its orbit space is a compact $3$-manifold $T^3_A$ admitting a natural structure of fibre bundle over $S^1 = \R / \Z$. 
In fact, we consider only the hyperbolic case where $tr \, A >2$ and hence $A$ has two real eigenvalues $\lambda > 1$ and $1/ \lambda < 1$. 

\begin{lemma} \label{eigenvectors}
If $A$ is hyperbolic, then the eigenvectors $u$ and $v$ associated to the eigenvalues $\lambda > 1$ and $1/ \lambda < 1$ generate two different eigenlines with irrational slope.
\end{lemma}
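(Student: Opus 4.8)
The plan is to extract everything from the characteristic polynomial of $A$. Since $A = \matrice{a}{b}{c}{d} \in SL(2,\Z)$, its characteristic polynomial is the monic integer polynomial $\chi(x) = x^2 - (tr\, A)\, x + 1$, whose roots are exactly the eigenvalues $\lambda$ and $1/\lambda$. Hyperbolicity means $tr\, A > 2$, so the discriminant $(tr\, A)^2 - 4$ is strictly positive; the two eigenvalues are therefore real and distinct, and distinct eigenvalues force $u$ and $v$ to span two \emph{different} lines. This settles the first assertion essentially for free.

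The heart of the matter is irrationality, and I would first prove that $\lambda$ itself is irrational. We have $\lambda = \tfrac{1}{2}\big(tr\, A + \sqrt{(tr\, A)^2 - 4}\big)$, so it suffices to check that the integer $(tr\, A)^2 - 4$ is not a perfect square. Suppose it were. Then $\lambda$ would be rational; but $\lambda$ is a root of the monic integer polynomial $\chi$, hence an algebraic integer, and a rational algebraic integer is an ordinary integer, so $\lambda \in \Z$. The same reasoning applies to the other root $1/\lambda$, giving $1/\lambda \in \Z$, and then $\lambda \cdot (1/\lambda) = 1$ with both factors integers forces $\lambda = 1/\lambda = \pm 1$, contradicting $\lambda > 1$. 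Hence $\sqrt{(tr\, A)^2 - 4}$, and therefore $\lambda$, is irrational.

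Next I would read off the slope. Writing $u = \vect{u_1}{u_2}$ with $Au = \lambda u$, the first coordinate of this relation gives $b\, u_2 = (\lambda - a)\, u_1$. Before dividing, I dispose of the degenerate case $b = 0$: then $A$ is lower triangular with eigenvalues $a$ and $d$ satisfying $ad = 1$, which forces $a = d = \pm 1$ and hence $tr\, A = \pm 2$, contradicting hyperbolicity; so $b \neq 0$. The slope of the $\lambda$-eigenline is therefore $(\lambda - a)/b$, which is irrational because $\lambda$ is irrational while $a$ and $b$ are integers. The identical computation applied to $1/\lambda$ yields the irrationality of the slope of the $v$-eigenline.

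The argument is short, and the single mildly delicate point is the irrationality of $\lambda$: the perfect-square dichotomy combined with the constraint $\lambda \cdot (1/\lambda) = 1$ is exactly what excludes rational eigenvalues. This is also the one place where the integrality of $A$ enters in an essential way (through $tr\, A \in \Z$), rather than merely the condition $\det A = 1$.
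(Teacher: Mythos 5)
Your proof is correct, but it takes a genuinely different route from the paper's. The paper argues by contradiction on the \emph{eigenvector}: if an eigenline had rational (or infinite) slope it would contain a primitive integer vector $w=(p,q)$, which can be completed to a matrix $B\in SL(2,\Z)$ with $Be_1=w$; then $B^{-1}AB$ is an upper triangular element of $SL(2,\Z)$, so its diagonal entries --- the eigenvalues of $A$ --- are $\pm 1$, contradicting hyperbolicity. You instead attack the \emph{eigenvalue}: $\lambda$ is irrational (your perfect-square/algebraic-integer dichotomy is sound; alternatively, for $n=tr\,A\geq 3$ one sees directly that $n^2-4=m^2$ forces $(n-m)(n+m)=4$, which has no solution with $n\geq 3$), and the slope $(\lambda-a)/b$ inherits irrationality since $a,b\in\Z$ and $b\neq 0$. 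The one micro-step you leave implicit is that $u_1\neq 0$ before dividing; but this is immediate from $b\,u_2=(\lambda-a)u_1$ with $b\neq 0$, so the eigenline is in particular non-vertical and the finite slope makes sense. As for what each approach buys: the paper's argument never computes $\lambda$ and runs entirely through the $SL(2,\Z)$-change of basis, which treats the vertical directions $(1,0)$ and $(0,1)$ on exactly the same footing as every other rational slope and would adapt to excluding rational invariant subspaces in higher rank; your argument is more computational but delivers the sharper conclusion that the two slopes are conjugate quadratic irrationals in $\Q\big(\sqrt{(tr\,A)^2-4}\,\big)$ --- more than the lemma asserts, though only the irrationality (equivalently, the $\Z$-linear independence of $b'$ and $d'$, and of $a'$ and $c'$) is used afterwards.
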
 

\begin{proof} Assume on the contrary that $w =(p,q)$ is an eigenvector of $A$ where $p,q \in Z$ are relatively prime (including the cases where $p=0$ and $q=1$ or $p=1$ and $q=0$). Then there exists $w' = (p',q') \in \Z$ such that $pq' - qp' = 1$ and then the matrix
$$
B = \matrice{p}{p'}{q}{q'} \in SL(2,\Z)
$$
satisfies $Be_1= w$ and $Be_2 = w'$, This implies that $e_1$ is an eigenvector of $B^{-1}AB$ so that $B^{-1}AB$ is an upper triangular matrix. Since $B^{-1}AB$ belongs to $SL(2,\Z)$, we have $\pm B^{-1}AB \in U$. Then the eigenvalues of $A$ are equal to $\lambda = \pm 1$, which contradicts the hyperbolicity of $A$. 
\end{proof}

\begin{example} \label{secondexample}
The foliation of $\R^2$ by parallel $u$-lines induces a minimal flow on $T^2$. The product of this foliation with the vertical factor defines a $2$-dimension foliation of $T^2 \times \R$ which is invariant by the $\Z$-action described above. Thus, by passing to the quotient, we obtain a foliation $\F$ of $T^3_A$ whose leaves are planes and cylinders. Indeed, according to \cite{BR} and denoting by $\pi$ the projection from $\R^2$ onto $T^2$, for each point $(x,y) \in \Q^2$, there is a positive integer $p \geq 1$ such that $A^p\pi(x,y) =  \pi(x,y)$, and so $\F$ contains infinitely many cylindrical leaves. We will see that all the leaves are hyperbolic surfaces. 
Denote by $\A$ the group of orientation-preserving affine transformations of $\R$, which is isomorphic to $B$. Let $\Gamma$ be the discrete subgroup of $\SL \times \A$ generated by 
 \begin{eqnarray}
T_1(x,y,t) & = &( x+1,y,t) \label{t1} \\
T_2(x,y,t) & = & (x,y+1,t)  \label{t2} \\
h_A(x,y,t) & = & (A \vect{x}{y}, t+1). \label{hA}
\end{eqnarray}
acting on $\R^3 = \R^2 \times \R$. The foliated manifold $T^3_A$ is the quotient of $\R^3$ by the action of $\Gamma$, so is endowed with a complete affine structure. Let $u$ and $v$ be the eigenvectors of $A$ verifying $Au = \lambda u$ and $Av = \lambda^{-1} v$. Assume $det(u|v) = 1$. By changing the canonical affine frame $(0,e_1,e_2,e_3)$ by $(0,u,v,e_3)$ in $\R^3$, the transformations (\ref{t1}), (\ref{t2}) and 
(\ref{hA}) can be written as follows: 
\begin{eqnarray}
T_1(x',y',t') & = &( x'+a', y'+b',t')  \label{T1} 
\\
T_2(x',y',t') & = & (x'+c',y'+d',t')  \label{T2} \\
h_A(x',y',t') & = & (\lambda x',  \lambda^{-1}y', t'+1) \label{HA}
\end{eqnarray}
where $u = (d',-b')$ and $v = (c',-a')$. Thus, from Lemma~\ref{eigenvectors}, the entries $a'$ and $c'$, and the entries $b'$ and $d'$ are linearly independent over $\Z$. In fact, the universal covering $\R^3$ of 
$T^3_A$ can be identified with the product $\Hy \times \R$ by sending each point $(x',y',t') \in \R^3$ to 
the point $(z',y') = (x' + i \lambda^{t'},y') \in \Hy \times \R$. In this model, the transformations (\ref{T1}), (\ref{T2}) and 
(\ref{HA})  can be written
\begin{eqnarray}
T_{1\ast}(z',y') & = &( z'+a', y'+b')  \label{T*1} 
\\
T_{2\ast}(z',y') & = & (z'+c',y'+d')  \label{T*2} \\
h_{A\ast} (z',y') & = & (\lambda z',  \lambda^{-1}y') \label{H*A}
\end{eqnarray}
Moreover, the foliation $\F$ lifts to the horizontal foliation of $\Hy \times \R$.  
\end{example}

\begin{proposition} [\cite{GS}] \label{GS}
 The transversely affine foliation $\F$ is defined by a locally free $B$-action
whose orbits are dense hyperbolic planes and cylinders. \end{proposition}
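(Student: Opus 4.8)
The plan is to follow the same strategy as in the proof of Proposition~\ref{Bfirstexample}, and as in the argument of Mart\'{\i}nez and Verjovsky: exhibit inside the unit tangent bundle $X = T^1\F$ a closed $B$-invariant set $\M$ that the bundle projection $\pi : X \to M$ maps diffeomorphically onto $M$, and then transport the right $B$-action from $\M$ to $M$, checking that its orbits are the leaves of $\F$.

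First I would make $X = T^1\F$ explicit. In the model $\widetilde M \cong \Hy \times \R$ of Example~\ref{secondexample}, $\widetilde\F$ is the horizontal foliation with leaves $\Hy \times \{y'\}$, so its leafwise unit tangent bundle is $\PSL \times \R$ once we identify $T^1\Hy \cong \PSL$. Reading off (\ref{T*1})--(\ref{H*A}), each generator of $\Gamma$ acts on the $\Hy$-factor through an element of $B$: the translations $z' \mapsto z'+a'$ and $z' \mapsto z'+c'$ and the homothety $z' \mapsto \lambda z'$ all fix $\infty \in \partial\Hy$. Hence on $X$ the group $\Gamma$ acts by left translation by elements of $B$ on the $\PSL$-factor (together with the affine action on the $\R$-factor); in particular it commutes with the right $B$-action and preserves the set of unit vectors pointing towards $\infty$.

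The key step is the choice of $\M$. Since the right $B$-orbit of the upward vector at $i$ is exactly the set $\M_\infty$ of unit vectors of $\Hy$ pointing towards the fixed point $\infty$ of $B$ (the right $D$-action sweeping out all heights along the imaginary axis and the right $U$-action all horizontal positions), I would take $\M$ to be the image in $X$ of $\M_\infty \times \R$. By the previous paragraph this set is both $\Gamma$-invariant and right $B$-invariant, and inside each leaf $\PSL \times \{y'\}$ it is a single right $B$-orbit, diffeomorphic to $B$. Because there is exactly one vector pointing towards $\infty$ over each point of $\widetilde M = \Hy \times \R$, the projection $\pi$ restricts to a diffeomorphism $\M_\infty \times \R \to \widetilde M$, and hence, after passing to the quotient, to a diffeomorphism $\M \to M$.

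Transporting the right $B$-action on $\M$ through this diffeomorphism defines a $B$-action on $M$ whose orbits are the images of the right $B$-orbits $\M_\infty \times \{y'\}$, that is, the leaves of $\F$; this shows $\F$ is defined by a $B$-action. Local freeness follows because the right $B$-action on $\M$ upstairs is free (simply transitive on each $\M_\infty \times \{y'\} \cong B$) and commutes with the free properly discontinuous $\Gamma$-action, so the stabiliser of each point injects into $\Gamma$ and is therefore a discrete subgroup of $B$. Finally, as in Example~\ref{secondexample}, the leaf through $\Gamma(z',y')$ is $\Gamma_{y'} \rs \Hy$, where $\Gamma_{y'}$ is the stabiliser of the height $y'$ under the transverse $\Gamma$-action on $\R$; this stabiliser is trivial (giving a hyperbolic plane) or infinite cyclic generated by a homothety (giving a hyperbolic cylinder), while density of every leaf follows from the fact that the transverse translations generate the dense subgroup $\Z b' + \Z d'$ of $\R$ by Lemma~\ref{eigenvectors}. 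The main point to get right is the bookkeeping of left versus right actions, ensuring that the right $B$-action genuinely descends to $M$ and that its orbits coincide with the leaves; this is precisely what the commutation of the left $\Gamma$-action (through $B$) with the right $B$-action guarantees.
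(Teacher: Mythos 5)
Your argument is correct and is in substance the paper's own proof: the right $B$-action you transport from the section $\M_\infty\times\R$ of unit vectors pointing to $\infty$ is exactly the right-translation action of $B$ on $\Hy\times\R$ (via the identification $\Hy\cong B$, $f\mapsto f(i)$) that the paper writes down directly, and both versions obtain local freeness from commutation with the free, properly discontinuous $\Gamma$-action and density of the leaves from the density of $\Z b'+\Z d'$ in $\R$. The only difference is packaging: you route through $X=T^1\F$ as in Proposition~\ref{Bfirstexample}, while the paper defines the action on $\Hy\times\R$ without passing to the unit tangent bundle.
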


\begin{proof} Firstly, we remark that $\F$ admits a affine transverse structure because the $\Gamma$-action on  $\Hy \times \R$  
defined by (\ref{T*1}), (\ref{T*2}) and (\ref{H*A}) induces an affine action on the 
$\R$-factor generated by 
\begin{eqnarray}
\overline{T}_{1\ast}(y' )& = & y'+b' \label{T'1} 
\\
\overline{T}_{2\ast}(y') & = & y' + d' \label{T'2} \\
\overline{h}_{A\ast} (y') & = & \lambda^{-1}y'  \label{H'A}
\end{eqnarray}
We know that the leaves of $\F$ are planes or cylinders. Since $b'$ and $d'$ are linear independent over $\Z$, $\overline{T}_{1\ast}$ and 
$\overline{T}_{2\ast}$ generate a dense subgroup of translations of $\R$ and hence all leaves are dense.
On the other hand, there is a natural right $B$-action on $\Hy \times \R$ where each element 
$$
\matrice{\sqrt \alpha}{\beta / \sqrt \alpha}{0}{1 / \sqrt \alpha} 
$$
of $B$ acts by homographies on the first factor $\Hy$ sending $z$ to $\alpha z + \beta$,  and trivially on the second factor $\R$. Since this free $B$-action commutes with the $\Gamma$-action, it induces a locally free $B$-action on $T^3_A$ whose orbits are just the leaves of $\F$. 
\end{proof}

\begin{remark} The group law
$(x',y',t')(x'',y'',t'') = (x'+\lambda^{t'}x'', y' +\lambda^{-t'}y'', t'+t'')$ defines a group structure on $\R^3$ that becomes a Lie group isomorphic to the solvable Lie group $Sol^3$. Each horizontal leaf $\Hy \times \{y'\}$ is the orbit of any point $(x',y',t')$ by the right $B$-action determined by the inclusion
$$
i\big( \matrice{\sqrt \alpha}{\beta / \sqrt \alpha}{0}{1 / \sqrt \alpha} \big) = (\beta,0,\frac{log \, \alpha}{log \,  \lambda})
$$
of $B$ as closed subgroup of $Sol^3$. The orbits of the corresponding $U$-action are the horizontal $x'$-lines in $\R^3$, which correspond to the parallel $u$-lines before changing the affine frames. 
\end{remark}

\begin{proof}[Proof of Theorem~\ref{thm3}] 
By construction, the unit tangent bundle $X = T^1\F$ is the quotient of $T^1 \Hy \times \R$ by the $\Gamma$-action generated by  (\ref{T*1}), (\ref{T*2}), and (\ref{H*A}).  By duality,  the right $B$-action on $X = T^1\F$ has the same dynamics as the $\Gamma$-action on 
$\partial \Hy \times \R$ generated by the transformations 
\begin{eqnarray}
T_{1\ast} (\xi,y') & = &( \xi+a', y'+b')  \label{T1*} 
\\
T_{2\ast} (\xi,y') & = & (\xi+c',y'+d')  \label{T2*} \\
h_{A\ast} (\xi,y') & = & (\lambda \xi,  \lambda^{-1}y') \label{HA*}
\end{eqnarray}
We first observe that $\{\infty\} \times \R$ is a closed $\Gamma$-invariant subset of $\partial \Hy \times \R$. Minimality and uniqueness arise from 
$\lim_{n \to +\infty} h_{A\ast}^n (\xi,y') = (\infty,0)$ for all $(\xi,y') \in (\partial \Hy -\{0\}) \times \R$ and
$\overline{\Gamma(0,y')} = \partial \Hy \times \R$ for all $y' \in \R$. Therefore, there is an unique minimal set $\M$ for the right $B$-action on 
$X = T^1\F$, obtained as the $\Gamma$-quotient of the pre-image of $\{\infty\} \times \R$ by the canonical projection of $T^1 \Hy \times \R$ onto $\partial \Hy \times \R$.  But the closure of each $U$-orbit reduces to a toroidal fibre of the bundle structure of $T^3_A$ over $S^1$. 
\end{proof}

An important difference between this example and all the previous ones is that the discrete subgroup $\Gamma$ of $\PSL \times \A$ projects onto a subgroup $p_1(\Gamma)$ of $\PSL$ which is neither discrete, nor dense. Moreover, by construction, the $\Gamma$-action induced on 
$(\partial \Hy - \{\infty\} ) \times \R$ is conjugated to the action of the group of affine transformations of  $\R^2$ generated by the linear automorphism $A$ and the translations $t_1(x,y) = (x+1,y)$ and $t_2(x,y) = (x,y+1)$. Thus, the $B$-action induced on $X - \M$ is dual to the $\Z$-action on $T^2$ generated by $A$, whose topological dynamics have been carefully described by R. Adler \cite{A}. 
It follows that there are $B$-orbits which are dense in $X$, and others whose closures are not manifolds. 

\section{Final comments} \label{SFC}

As we already mentioned, Example~\ref{firstexample} shows that neither Proposition~\ref{charactB}, nor Proposition~\ref{cocompactB} are valid in the non-Riemannian case. Nevertheless, even if Hedlund's Theorem cannot be generalised, the question formulated by Mart\'{\i}nez and Verjovsky remains open: {\em is it true that the horocycle flow on the unit tangent bundle $X=T^1 \F$ of a minimal foliation $\F$ of a compact manifold $M$ by hyperbolic surfaces is minimal if and only the $B$-action is minimal?} Example~\ref{secondexample} proves that this conjecture cannot be strengthened by establishing an equivalence between $U$-minimal and $B$-minimal sets. 
As proved in  Propositions~\ref{Bfirstexample}~and~\ref{GS}, Examples~\ref{firstexample}~and~ \ref{secondexample} are defined by locally free $B$-actions.
\medskip 


In \cite{MV}, Mart\'{\i}nez and Verjovsky have reformulated their conjecture as follows: {\em is it true that for any compact manifold foliated  by dense hyperbolic surfaces, either the foliation is defined by a $B$-action or the $U$-action on $X$ is minimal?}  Notice that the non-homogeneous $\PSL$-Lie foliation constructed in \cite{HMM} (as well as any Riemannian foliation) cannot be defined by a $B$-action, since it admits a transverse invariant volume, according to Proposition~3.1 of \cite{P}. It is an open question to know if the $U$-action is minimal or not. 
\medskip 

Progress on this issue is interesting but very restricted in the non-homogeneous case. We place in an appendix some results, which are related to those of Sections~\ref{Section1} in this more general context. 

\section*{Appendix: $U$-minimality for some non-homogeneous foliations}

Let us introduce now the group of orientation-preserving $C^r$-diffeomorphisms $\Diff$ of some orientable $C^r$-manifold $F$, $0 \leq r \leq +\infty$ or $r = \omega$, and give some remarks for the case where $M$ is a compact manifold obtained as the quotient of 
$\Hy \times F$ by a subgroup $\Gamma \subset \PSL \times  \Diff$ acting freely and properly discontinuously on $\Hy \times F$.  Like in Section~\ref{Section1}, we denote by $p_1$ and $p_2$ the first and second projection of  $\PSL \times  \Diff$ onto $\PSL$ and 
$\Diff$ respectively. Recall that $M$ admits a foliation $\F$ induced by the horizontal foliation of $\Hy \times F$ and $\F$ is minimal if and only if  $p_2(\Gamma)$ acts minimally on $F$. Denote by $p$ and $q$ the canonical projections
$$p : \PSL \times F \to \partial \Hy \times F = \PSL / B \times F$$ and $$q : \PSL \times F \to X = \Gamma \rs  \PSL \times F$$
corresponding to the natural right $B$-action and left $\Gamma$-action on $\PSL \times F$.

The first result generalises Example~\ref{secondexample}: 

\begin{aproposition} \label{prop5.3} 
If $p_1(\Gamma)$ is solvable, then the natural right $B$-action on $X$ is not minimal. More precisely, there is a $B$-minimal set
homeomorphic to $M$. 
\end{aproposition}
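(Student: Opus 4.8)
The plan is to pass to the dual picture and to locate a proper closed invariant set using the solvability of $p_1(\Gamma)$. Exactly as in Section~\ref{Section1}, the right $B$-action on $X = \Gamma \rs \PSL \times F$ is minimal if and only if the left $\Gamma$-action on $\PSL/B \times F = \partial\Hy \times F$ is minimal; so it is enough to produce a proper closed $\Gamma$-invariant subset of $\partial\Hy \times F$ and to check that the associated $B$-invariant set in $X$ is a copy of $M$.

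First I would prove that $\Delta := p_1(\Gamma)$ has a global fixed point $\xi_0 \in \partial\Hy$. Since the closure of a solvable subgroup is solvable while $\PSL$ is not solvable, $\Delta$ cannot be dense, so either $\Delta$ is discrete or it is neither discrete nor dense. If $\Delta$ is discrete it is a Fuchsian group containing no non-abelian free subgroup, hence elementary; if $\Delta$ is neither discrete nor dense, then, using that $\Delta$ normalises $\oclosure{\Delta}$ and applying Classification Lemma~\ref{lemmaD}, the group $\Delta$ is conjugate into $PSO(2,\R)$, fixes a point of $\partial\Hy$, or preserves a geodesic, according to whether $\oclosure{\Delta}$ is conjugate to $PSO(2,\R)$, to a unipotent or Borel subgroup, or to a one-parameter diagonal subgroup. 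In every case $\Delta$ is elementary. The elliptic and geodesic-preserving alternatives are then ruled out by compactness: if $\Delta$ fixed a point $z_0 \in \Hy$ or preserved a geodesic $\alpha$, the $\Delta$-invariant function $z \mapsto d(z,z_0)$, respectively $z \mapsto d(z,\alpha)$, would descend to a continuous surjection $M \to [0,+\infty)$, which is impossible for $M$ compact. Hence $\Delta$ fixes some $\xi_0 \in \partial\Hy$.

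Next, I would fix $g_0 \in \PSL$ with $g_0(\infty) = \xi_0$, so that $\Delta \subset g_0 B g_0^{-1}$, and set $\mathcal M := q(g_0 B \times F)$. The set $g_0 B \times F$ is closed and, since $g_0^{-1} p_1(\gamma) g_0 \in B$ for all $\gamma \in \Gamma$, it is $\Gamma$-invariant; thus $\mathcal M$ is a closed subset of $X$, plainly invariant under the right $B$-action. Because $B$ acts simply transitively on $\Hy$, the map $(g_0 b, f) \mapsto (g_0 b(i), f)$ is a $\Gamma$-equivariant homeomorphism $g_0 B \times F \to \Hy \times F$, so after dividing by $\Gamma$ the bundle projection $X = T^1\F \to M$ restricts to a homeomorphism $\mathcal M \to M$. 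In particular $\dim\mathcal M = \dim M < \dim X$, whence $\mathcal M \neq X$ and the right $B$-action is not minimal. Under this identification the right $B$-action on $\mathcal M$ becomes a locally free $B$-action on $M$ whose orbits are exactly the leaves of $\F$, just as in Example~\ref{secondexample} and Proposition~\ref{GS}; since $\F$ is minimal (equivalently, $p_2(\Gamma)$ acts minimally on $F$), these orbits are dense and $\mathcal M$ is $B$-minimal.

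I expect the main obstacle to be the first step. Everything following the production of $\xi_0$ merely transcribes the computation already done for $T^3_A$ in Example~\ref{secondexample} and for the homographic example in Proposition~\ref{Bfirstexample}. The delicate point is to force, from solvability alone together with the compactness of $M$ and allowing $p_1(\Gamma)$ to be non-discrete, that the whole group $p_1(\Gamma)$ shares a single boundary fixed point rather than only preserving a pair of boundary points or an interior point.
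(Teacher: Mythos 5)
Your proof is correct and follows essentially the same route as the paper's: a solvable $p_1(\Gamma)$ must fix a point $\xi_0\in\partial\Hy$ (the paper disposes of this in one line by excluding only the elliptic case via compactness, whereas you also rule out the geodesic-preserving alternative explicitly, which is a worthwhile extra detail), and the set $q(g_0B\times F)=q(p^{-1}(\{\xi_0\}\times F))$ is then the closed $B$-invariant set homeomorphic to $M$, minimal because $\F$ is. No gaps.
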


\begin{proof} Since $p_1(\Gamma)$ is solvable, but it is not included in $PSO(2,\R)$ by the compactness of $M$,  this group fixes a point $\xi \in \partial \Hy$. Then $Z = \{\xi\} \times F$ is a $B$-minimal closed subset of $\partial \Hy \times F$ because $p_2(\Gamma)$ acts minimally on $F$. It follows that $\widetilde{Y} = p^{-1}(Z)$ is a $\Gamma$-invariant and $B$-invariant closed subset of $\PSL \times F$, which is homeomorphic to $\Hy \times F$. Clearly,  we deduce that $\widetilde{Y}$ projects onto a $B$-minimal closed set $Y = q(\widetilde{Y}) \subset X$, which is homeomorphic to $M$. 
\end{proof}

Suppose now that $p_1(\Gamma)$ is not solvable, so $p_1(\Gamma)$ is discrete cocompact or dense. In particular, its action on $\partial \Hy$ is minimal. Assuming $F$ compact and $\Gamma$ torsion-free, we have the following result:

\begin{aproposition} \label{prop5.5} Assume that $F$ is compact, $\Gamma$ is torsion-free, 
and $p_1(\Gamma)$ is not solvable. If $p_2$ is not injective, then 
the natural right $U$-action on $X$ is minimal. 
\end{aproposition}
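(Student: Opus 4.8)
The plan is to argue by duality and then to reduce everything to the mechanism already used for Theorem~\ref{thm1}. Recall that the right $U$-action on $X$ is minimal if and only if $\Gamma$ acts minimally on $E \times F = \PSL/U \times F$, and the right $B$-action is minimal if and only if $\Gamma$ acts minimally on $\partial \Hy \times F$. The hypothesis that $p_2$ is not injective means precisely that the subgroup
$$
N = \{ \, f \in \PSL \ / \ (f,id) \in \Gamma \, \}
$$
is non-trivial, a non-identity element of $\ker(p_2|_\Gamma)$ being of the form $(f,id)$ with $f \neq Id$. Writing $\Delta = p_1(\Gamma)$, a direct computation shows that $N$ is normalised by $\Delta$, and $N$ is discrete in $\PSL$ since $f \mapsto (f,id)$ embeds it into the discrete group $\Gamma$. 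First I would pin down $\Delta$: being not solvable, it is discrete cocompact or dense. If $\Delta$ were dense, then, since the normaliser of the closed subgroup $N$ is closed and contains $\Delta$, the group $N$ would be normal in $\PSL$; being a non-trivial proper discrete subgroup, this contradicts the simplicity of $\PSL$. Hence $\Delta$ is discrete, and it is cocompact because $F$ and $M = \Gamma \rs \Hy \times F$ are compact (project $\Hy \times F$ onto $\Hy$). In particular $\Delta$ is a non-elementary Fuchsian group without parabolic elements, so $\Gamma$ contains no semi-parabolic element.

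Next I would establish that the right $B$-action is minimal, that is, that $\Gamma$ acts minimally on $\partial \Hy \times F$; this is the step where $N$ is used decisively. Since $N$ is a non-trivial normal subgroup of the non-elementary group $\Delta$, its limit set equals $\Lambda(\Delta) = \partial \Hy$, so $N$ acts minimally on $\partial \Hy$. Fixing $(\xi,s) \in \partial \Hy \times F$ and applying the elements $(f,id)$ with $f \in N$, which move only the first coordinate, we obtain $\partial \Hy \times \{s\} \subset \overline{\Gamma(\xi,s)}$. Since $\overline{\Gamma(\xi,s)}$ is $\Gamma$-invariant and every $\gamma = (f',\phi) \in \Gamma$ preserves $\partial \Hy$ in the first factor, it follows that $\partial \Hy \times \{\phi(s)\} \subset \overline{\Gamma(\xi,s)}$ for all $\phi \in p_2(\Gamma)$. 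As $\F$ is minimal, $p_2(\Gamma)$ acts minimally on $F$, and closedness of $\overline{\Gamma(\xi,s)}$ together with compactness of $\partial \Hy$ yields $\partial \Hy \times F \subset \overline{\Gamma(\xi,s)}$. Hence the right $B$-action on $X$ is minimal. Note that this is exactly the point where the contrast with Example~\ref{firstexample} appears: there is no subgroup $N$ there, and $B$-minimality fails.

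Finally I would upgrade $B$-minimality to $U$-minimality by repeating the argument of Theorem~\ref{thm1} almost verbatim. By compactness of $X$ the right $U$-action admits a non-empty minimal set $\M$, and I would show $\M$ is $D$-invariant, hence $B$-invariant, hence equal to $X$ by the $B$-minimality just proved. The closed set $H_\M = \{ \, h' \in H \ / \ \M h' \cap \M \neq \emptyset \, \}$ is invariant under the left and right $U$-actions exactly as in Lemma~\ref{invset}; starting from $x = \Gamma h \in \M$ and using $\overline{xU} = \M$ one produces a sequence $h_n = (f_n,g_n) \to (Id,id)$ in $H_\M$ whose first components $p_1(\gamma_n)$ admit no convergent subsequence, and the analogue of Lemma~\ref{hn} gives $f_n \notin B$ for large $n$. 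The only place where the Lie structure of the second factor entered in Section~\ref{Section1} was through Proposition~\ref{semiparabolic} (absence of semi-parabolic elements), which holds here because $\Delta$ is cocompact Fuchsian; all remaining manipulations take place in the $\PSL$-factor and only use that $\Diff$ is a topological group with $g_n \to id$. Conjugating $f_n$ on both sides by suitable elements of $U$ then produces $(\matrice{\alpha}{0}{0}{\alpha^{-1}},id) \in H_\M$ for every $\alpha \in \R^\ast_+$, whence $\M$ is $D$-invariant and the proof concludes. The main obstacle I anticipate is precisely this last transfer: one must check that the computations from Lemma~\ref{hn} to the end of the proof of Theorem~\ref{thm1} remain valid when $G$ is replaced by the non-locally-compact group $\Diff$, the key enabling fact being the absence of semi-parabolic elements, which we obtain for free from the cocompactness of $\Delta = p_1(\Gamma)$.
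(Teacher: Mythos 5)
Your first two steps are sound and essentially match the paper's setup: you correctly identify the normal subgroup $N=\ker (p_2|_\Gamma)$, deduce that $\Delta=p_1(\Gamma)$ is discrete cocompact (hence free of parabolics), observe that $N$ is a non-trivial normal subgroup of a non-elementary group and therefore acts minimally on $\partial\Hy$, and use the fact that $N$ moves only the first coordinate to get $B$-minimality. The gap is in your last step, the upgrade from $B$-minimality to $U$-minimality by transplanting the $H_\M$ argument of Theorem~\ref{thm1}. That argument does not survive the replacement of $G$ by $\Diff$, and the obstruction is not the one you diagnose (absence of semi-parabolic elements): it is that in Section~\ref{Section1} the set $H_\M=\{h'\in H\,/\,\M h'\cap\M\neq\emptyset\}$ lives inside the group $H=\PSL\times G$ acting on its own quotient $X=\Gamma\rs H$ by right translation, whereas here $X=\Gamma\rs\PSL\times F$ carries no right $\Diff$-action at all (the candidate $(f,s)\cdot(f',\psi)=(ff',\psi^{-1}(s))$ fails to commute with the left $\Gamma$-action unless $\psi$ centralises $p_2(\Gamma)$). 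Concretely, from $xu_n=\Gamma(fu_n,s)\to x$ you obtain $\gamma_n=(\gamma_{1n},\phi_n)\in\Gamma$ with $\gamma_{1n}fu_n\to f$ and $\phi_n(s)\to s$; setting $f_n=f^{-1}\gamma_{1n}fu_n\to Id$, the point $xf_n=\Gamma(\gamma_{1n}fu_n,s)=\Gamma(fu_n,\phi_n^{-1}(s))$ differs from $xu_n$ and need not lie in $\M$, so $f_n$ does not land in the only available substitute $P_\M=\{f'\in\PSL\,/\,\M f'\cap\M\neq\emptyset\}$. Moreover $\phi_n(s)\to s$ at a single point does not give $\phi_n\to id$ in $\Diff$, so the second-coordinate error cannot be pushed to the identity the way $g_n=g^{-1}\gamma_{2n}g\to e$ was in the Lie case.

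The paper closes this step by a different and more direct route, which exploits $N$ once more. Dually, it suffices to prove that $\Gamma$ acts minimally on $E\times F$ with $E=\R^2-\{0\}/\{\pm Id\}=\PSL/U$. Choose a hyperbolic element $\gamma_1\in N$ and an eigenvector $v\in E$ with eigenvalue of modulus $\neq 1$; since $N$ acts minimally on $\partial\Hy$, one has $\overline{Nv}=E$ by \cite{DL}. Because $N$ acts trivially on $F$, this yields $E\times\{y\}\subset\overline{\Gamma(v,y)}$ -- the same trick you used on $\partial\Hy$, but performed in $E$, which is exactly where the extra input $\overline{Nv}=E$ is required. Minimality of $p_2(\Gamma)$ on $F$ then gives $\overline{\Gamma(v,y)}=E\times F$, and cocompactness of $\Delta$ together with compactness of $F$ steers an arbitrary $(w,z)$ back onto such a pair $(v,y')$. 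Replacing your final paragraph by this argument completes the proof.
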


\begin{proof} Assume the projection $p_2$ is not injective. Since the kernel $N$ is normalised by $p_1(\Gamma)$, the group $p_1(\Gamma)$ is discrete cocompact and $N$ is not cyclic. It follows that $\F$ admits leaves which are not homeomorphic to the plane or
the cylinder. Moreover, the foliated manifold $M$ is the quotient of $\Hy \times F$ by the diagonal action 
$\gamma(z,y) = (\gamma_1(z),\gamma_2(y))$ where $\gamma_1$ is an element of the cocompact discrete subgroup $p_1(\Gamma)$ of $\PSL$ and $\gamma_2$ is the corresponding element of 
$\Diff$. So $\F$ is obtained as suspension of the representation $h : \gamma_1 \in p_1(\Gamma) \mapsto \gamma_2 \in \Diff$. 
Now, let us prove that the corresponding $U$-action is minimal. 
By duality, it is enough to prove that $\Gamma$ acts minimally on the product  $E \times F$ where $E = \R^2- \{0\} / \{ \pm Id\}$. 
Let $v$ be an element of $E$ such that $\gamma_1 v = \lambda_1 v$ for some $\gamma_1 \in N$ with $|\lambda_1| \neq 1$. Since $N$ acts minimally on $\partial \Hy$, it is known \cite{DL} that 
$\overline{Nv} = E$. It follows $\overline{\Gamma(v,y)}$ contains $E \times \{y\}$ for all $y \in F$. 
Using the minimality of the action of $p_2(\Gamma)$ on $F$, we deduce that $\overline{\Gamma(v,y)} = E \times F$ for all $y \in F$. 
Indeed, for each point $(w,z) \in E \times F$, 
there is a sequence  $\{ \gamma_n \}_{n \geq 0}  =  \{(\gamma_{1n},\gamma_{2n})\}_{n \geq 0}$ in $\Gamma$ such that $z = \lim_{n \to +\infty} \gamma_{2n}(y)$. Since 
$(\gamma_{1n}^{-1}w, y) \in E \times \{ y \} \subset \overline{\Gamma(v,y)}$,
we have:
$$
(w,z) = \lim_{n \to +\infty} (w, \gamma_{2n}(y)) = \lim_{n \to +\infty} \gamma_n(\gamma_{1n}^{-1}w, y)  \in \overline{\Gamma(v,y)}.
$$
Finally, since $p_1(\Gamma)$ is discrete cocompact, given any point $(w,z) \in E \times F$, there is another sequence 
$\{ \gamma'_n \}_{n \geq 0}  =  \{(\gamma'_{1n},\gamma'_{2n})\}_{n \geq 0}$ in $\Gamma$ such that 
$v = \lim_{n \to +\infty} \gamma'_{1n}w$. By compactness of $F$, extracting a subsequence if necessary, we may assume that  $\gamma'_{2n}(y)$ converges to a point $y' \in F$.Thus 
$$
\lim_{n \to +\infty}\gamma'_n(w,y) = \lim_{n \to +\infty} ((\gamma'_{1n}w, \gamma'_{2n}(y)) = (v,y')
$$
Since $\overline{\Gamma(v,y')} \subset \overline{\Gamma(w,y)}$ and $\overline{\Gamma(v,y')} = E \times F$, we obtain that $\overline{\Gamma(w,y)} = E \times F$. 
\end{proof} 

\subsection*{Note added in proof} A Hedlund's theorem for foliations by hyperbolic surfaces which admit a leaf that contains an essential loop without holonomy has been announced by the authors in collaboration with M. Mart\'{\i}nez and A. Verjovsky \cite{ADMV} after the submission of this paper.


\begin{thebibliography}{99}

\bibitem{ADMV} F. Alcalde Cuesta, F. Dal'Bo, M. Mart\'{\i}nez, A. Verjovsky, Minimality of the horocycle flow on foliations by hyperbolic surfaces with non-trivial topology. Eprint arXiv:1412.3259v2.

\bibitem{A} R. L. Adler, Symbolic dynamics and Markov partitions. {\em Bull. Amer. Math. Soc.}, {\bf 35} (1997), 1-56.


\bibitem{BR} T. Banchof, M. I. Rosen. Periodic points of Anosov diffeomorphisms. {\em Proc. Sympos. Pure Math.,} {\bf XIV} 
(1970), 17-21.

\bibitem{B} A. Borel, Compact Clifford-Klein forms of symmetric spaces. {\em Topology}, {\bf 2} (1963), 111--122.

\bibitem{BT} R. Bott,  L. W. Tu, {\em Differential Forms in Algebraic Topology}.  Springer-Verlag, New York, 1982. 

\bibitem{BGSS} E. Breuillard, T. Gelander, J. Souto, P. Storm, Dense embeddings of surface groups.
{\em  Geom. Topol.}, {\bf  10} (2006), 1373--1389.

\bibitem{Bu} R. A. Blumenthal, Transversely homogeneous foliations. {\em Ann. Inst. Fourier},  {\bf 29} (1979), 143--158.

\bibitem{C} A. Candel. Uniformization of surface laminations. {\em Ann. Sci. \'Ecole Norm. Sup.}, {\bf 26 }(1993) 489--516. 

\bibitem{Ca} E. Cartan, La topologie des espaces repr\'esentatifs des groupes de Lie. {\em L'Enseignement MathŽmatique,} {\bf 35} (1936), 177--200. 
Reprinted in {\em  \OE uvres}, Part I, vol. 2, Gauthier-Villars,  Paris, 1952-1955, 1307--1330.

\bibitem{D} F. Dal'Bo, Topologie du feuilletage fortement
stable. \emph{Ann. Inst. Fourier }, {\bf 50} (2000), 
981--993.

\bibitem{DL} F. Dal'Bo, \emph{Geodesic and Horocyclic Trajectories}. Springer-Verlag, London, 2011.


\bibitem{F} E. F\'edida, Sur les feuilletages de Lie. {\em C.R. Acad. Sci. Paris}, {\bf 272} (1971), 999--1002.


\bibitem{G1} \'E. Ghys, Riemannian foliations: examples and problems, Appendix to {\em Riemannian foliations} by P. Molino. Progr. Math. 73, Birkh\"auser, Boston, 1988, 297--314.

\bibitem{G2} \'E. Ghys, Dynamique des flots unipotents sur les espaces homog\`enes. {\em Ast\'erisque}, {\bf  206} (1992), 93--136.


\bibitem{GS} \'E. Ghys, V. Sergiescu, Stabilit\'e et conjugaison diff\'erentiable pour certains feuilletages.
{\em Topology}, {\bf 19} (1980), 179--197.

\bibitem{G} C. Godbillon, {\em Feuilletages: \'etudes g\'eom\'etriques}. Progr. Math. 98,  Birkh\"auser, Basel, 1991. 

\bibitem{HMM} G. Hector, S. Matsumoto, G. Meigniez, Ends of leaves of Lie foliations. 
{\em J. Math. Soc. Japan}, {\bf 57} (2005), 753--779.

\bibitem{H} G. A. Hedlund, Fuchsian groups and transitive horocycles. {\em Duke Math. J.}, {\bf 2} (1936), 530--542.


\bibitem{KSS}  D. Kleinbock, N. Shah, A. Starkov, Dynamics of subgroup actions
on homogeneous spaces of Lie groups and applications to number theory,
in {\em Handbook on Dynamical Systems, Vol. 1A.} North Holland, 2002, 813--930.

\bibitem{Ku} M. Kulikov, The horocycle flow without minimal sets. {\em C. R. Acad. Sci. Paris, S\'er. I}, {\bf 338} (2004), 477--480.

\bibitem{MV} M. Mart\'{\i}nez, A. Verjovsky, Horocyclic flows for laminations by hyperbolic Riemann surfaces and HedlundÕs theorem.  Preprint ArXiv:0711.2307v3.

\bibitem{Mo1} P. Molino, G\'eom\'etrie globale des feuilletages riemanniens. {\em Proc. Kon. Ned. Akad. Wetensch. Ser. A}, {\bf 85} (1982), 45--76. 

\bibitem{Mo2} P. Molino, {\em Riemannian foliations}. Progr. Math. 73, Birkh\"auser, Boston, 1988.

\bibitem{P} S. Petite, On invariant measures of finite affine type tilings. {\em Ergodic Theory Dynam. Systems}, {\bf 26} (2006), 1159--1176. 

\bibitem{R} M. Ratner, Invariant measures and orbit closures for unipotent actions on homogeneous spaces. {\em Geom. Funct. Anal.}, {\bf 4} (1994), 236--257. 

\bibitem{S} H. Samuelson,s {\em Notes on Lie Algebras}. Universitext, Springer-Verlag, New York, 1990. 



\bibitem{V} A. Verjovsky, A uniformization theorem for holomorphic foliations, in {\em Proc. Lefschetz Centen. Conf. (M\'exico D.F., 1984)}.
{\em Contemp. Math.}, {\bf 58} (1987), 233-253.


\end{thebibliography}
\end{document}